\def\commentcolor{blue}
\newcounter{commentcounter} \setcounter{commentcounter}{181}
\newcommand{\COMMENT}[1]{\stepcounter{commentcounter}\ifnum \thecommentcounter=212\setcounter{commentcounter}{172}\fi{\normalsize\textcolor{\commentcolor} {\ensuremath{\smash{{}^\text{\footnotesize\ding{\thecommentcounter}}\!\!}}}}\marginpar{\color{\commentcolor}\footnotesize\vskip-.6 \baselineskip\noindent\raggedright\hsize1in{\normalsize\ding{\thecommentcounter}} \thinspace#1\vskip.3\baselineskip}%
\ifthenelse{\thecommentcounter=191}{\setcounter{commentcounter}{171}}{}%
\ifthenelse{\thecommentcounter=181}{\setcounter{commentcounter}{181}\def\commentcolor{magenta}}{}}
\newcommand{\R}{\mathbb{R}}
\newcommand{\C}{\mathbb{C}}
\newcommand{\D}{\mathbb{D}}
\newcommand{\Z}{\mathbb{Z}}
\providecommand{\Zi}{{\mathbb{Z}[i]}} \renewcommand{\Zi}{\mathcal{G}}
\newcommand{\CCnoD}{\mathbb{C}^2\!\setminus\!\Delta}
\newcommand{\rot}{\ensuremath{\iota}}
\newcommand{\dflip}{\ensuremath{\rho}}
\newcommand{\hflip}{\ensuremath{\eta}}
\renewcommand{\Re}{\mathrm{Re}}
\renewcommand{\Im}{\mathrm{Im}}
\newcommand{\Dih}{\mathrm{Dih}_4}
\newcommand{\NE}{_{\diamond}} % Even though Diamond and NE aren't the same.
\newcommand{\abs}[1]{\left\lvert\, #1 \,\right\rvert}
\newcommand{\cf}[1]{\left[\, #1 \,\right]}
\newcommand{\setform}[2]{\left\{\, #1 : #2 \,\right\}}
\newcommand{\<}[1][]{\begin{equation}\label{#1}}
\renewcommand{\>}{\end{equation}}
\newenvironment{cz}[1][]{\begin{center}\begin{tikzpicture}[#1]}{\end{tikzpicture}\end{center}}
\newcommand{\textdef}[1]{\textit{#1}}
\colorlet{gold}{yellow!80!black}
\colorlet{purp}{red!50!blue} %\tikzstyle{purp}=[red!50!blue]
\newcommand{\PB}[1]{ \fill [purp] (#1) circle (1); }
\def\sval{0.707107}
\def\Inf{2}
\newcounter{thmnum}
\newcommand{\startTheorem}[1]{\begin{itemize} \item[] \textbf{#1}\it}
\newenvironment{thm}{\refstepcounter{thmnum}\startTheorem{Theorem~\thethmnum.}}{\end{itemize}}
\newenvironment{lem}{\refstepcounter{thmnum}\startTheorem{Lemma~\thethmnum.}}{\end{itemize}}
\newenvironment{remark}[1][Remark.]{\begin{itemize} \item[] \textbf{#1} }{\end{itemize}}
\title{Bijectivity and trapping regions for complex continued fraction transformation}
\author{Adam Zydney}
\thanks{I would like to thank Svetlana Katok for suggesting this area of research and for many helpful conversations over the years.}
\date{May 1, 2016}
\begin{document}
\maketitle
\begin{abstract}
This paper provides some preliminary results on the dynamics of certain complex continued fractions. After establishing some general number theoretic results, we explore the dynamics of the natural extension map associated to a specific complex continued fraction algorithm (the ``diamond'' algorithm). We prove that this map has a bijectivity domain that is a subset of a trapping region for the map and, moreover, that both these sets have a ``finite product structure'' arising from a finite partition specific to the particular algorithm.
\end{abstract}

\vspace{1em}
Contents 

\tableofcontents
\hspace*{-4em}\raisebox{-2mm}{\color{white}\rule{4em}{1.2em}} % to remove "Contents", which is printed *after* the ToC for some reason.

\vspace{-1em}

\section{Introduction}\label{sec Intro}

Number theoretic properties of complex continued fractions were studied classically by Hurwitz \cite{Hurwitz87} and Khinchin \cite{Khinchin} and more recently by Doug Hensley \cite{Hensley} and S.~G.~Dani and Arnaldo Nogueira \cite{DN}.
Dynamical properties of real continued fractions, namely their connection to coding geodesics on the modular surface, go back to Artin \cite{Artin}, with further development by Caroline Series \cite{S80} and Adler and Flatto \cite{AF}. Katok and Ugarcovici \cite{KU05, KU10, KU12} describe a two-parameter family of minus continued fraction algorithms, which they call ``$(a,b)$-continued fractions.'' They describe some number theoretic properties, the dynamics of the associated natural extension maps, and applications to coding geodesics.

The main result of \cite{KU10} is that, with a few exceptions, the natural \mbox{extension} map $F_{a,b}$ on $\R^2$ has a global attractor set consisting of two connected components with ``finite rectangular structure,'' i.e., bounded by non-decreasing step functions with a finite number of steps.
The goal in this paper is to reach a similar result for the natural extension map associated to a particular complex continued fraction algorithm.

Section~\ref{sec NT} relates several properties of minus complex continued fractions, most of which are clear analogues of results in \cite{DN} for plus complex continued fractions.
Section~\ref{sec Diamond} gives a definition of ``finite product structure'' for sets in $\C^2$ and discusses the ``diamond algorithm'' on which the remainder of the paper is focused.
The two main results in Sections~\ref{sec Bij} and \ref{sec Trapping} are the existence and explicit description of a bijectivity domain $D\NE$ (Theorem~\ref{MAIN}) a trapping region $\Psi\NE \supset D\NE$ (Theorem~\ref{PSI}), both of which have a finite product structure. The proof that $\Psi\NE$ traps points (Lemma~\ref{V lemma}) depends on complex continued fraction theory, namely Theorem~\ref{complex convergence}.

%Future work may focus on proving whether the bijectivity domain is in fact a global attractor and on applications of the dynamics here to coding geodesics on the quotient of 3-dimensional real hyperbolic space by a particular Kleinian group.

\section{Complex continued fractions}\label{sec NT}

A \textdef{continued fraction}, or \textdef{c.f.} for short, is any expression of the form
\[ \label{cf-in-general}
	a_0 + \dfrac{b_1}{
	a_1+\dfrac{b_2}{a_2+\dfrac{b_3}{\ddots
	\raisebox{-.4em}{$+\;\tfrac{b_n}{a_n}$}
	}}}
	\qquad\text{or}\qquad
	a_0 + \dfrac{b_1}{
	a_1+\dfrac{b_2}{a_2+\dfrac{b_3}{\ddots
	}}}.
\]
A \textdef{minus continued fraction} is one in which $b_n = -1$ for all $n$, and a \textdef{plus continued fraction} has all $b_n = +1$.
For the most part, this paper will deal only with infinite minus continued fractions. 

Given the two sequences $\{a_n\}$ and $\{b_n\}$, one can define sequences $\{p_n\}$ and $\{q_n\}$ by
\[ %\label{pq-recurrence-in-general}
	\begin{array}{l@{\qquad\qquad}l@{\qquad\qquad}l}
    	p_{-2} = 0    &   p_{-1} = 1   &   p_n = a_n p_{n-1} + b_n p_{n-2}
    	\quad\text{for } n \ge 0; \\
    	q_{-2} = -1   &   q_{-1} = 0   &   q_n = a_n q_{n-1} + b_n q_{n-2}
    	\quad\,\text{for } n \ge 0.
	\end{array}
\]
Algebraic manipulations show that for all $n \ge 0$, 
\[
	\dfrac{p_n}{q_n} = 
	a_0 + \dfrac{b_1}{
	a_1+\dfrac{b_2}{a_2+\dfrac{b_3}{\ddots
	\raisebox{-.4em}{$+\;\tfrac{b_n}{a_n}$}
	}}}
\]
assuming $a_n \ne 0$.
This holds for $a_k$ and $b_k$ in any ring or field, not necessarily $\R$ or $\C$. The fraction $\frac{p_n}{q_n}$ is called the $n^\text{th}$ \textdef{convergent} of the continued fraction.

Since we deal only with minus continued fractions from now on, we introduce the notations
\[
	\cf{a_0, a_1, a_2, \ldots, a_n} =
	a_0 - \dfrac{1}{
	a_1-\dfrac{1}{a_2-\dfrac{1}{\ddots
	\raisebox{-.4em}{$-\;\tfrac{1}{a_n}$}
	}}}
\]
and 
\[
	\cf{a_0, a_1, a_2, \ldots} = %\lim_{n\to\infty} [a_0, a_1, a_2, \ldots, a_n].
	a_0 - \dfrac{1}{
	a_1-\dfrac{1}{a_2-\dfrac{1}{\ddots
	}}}.
\]
Occasionally in proofs, these notations may be used with non-integer $a_n$, and  the notation $\cf{a_0, a_1, a_2, \ldots}$ may be used as a formal expression even if the sequence $r_n = \cf{a_0, a_1, \ldots, a_n}$ has no limit as $n \to \infty$.

We also simplify the recurrence relations above to 
\< \label{pq recurrence}
	\begin{array}{l@{\hspace{3em}}l@{\hspace{3em}}l}
    	p_{-2} = 0    &   p_{-1} = 1   &   p_n = a_n p_{n-1} - p_{n-2}
    	\quad\text{for } n \ge 0; \\
    	q_{-2} = -1   &   q_{-1} = 0   &   q_n = a_n q_{n-1} - q_{n-2}
    	\quad\,\text{for } n \ge 0.
	\end{array}
\>

Denote the set of Gaussian integers by
\[ \Zi = \setform{x+yi}{x,y \in \Z}. \]
For the remainder of this paper, elements of $\Zi$ may also be referred to as complex integers or simply integers. Additionally, a \textdef{rational} complex number is an element of $\mathbb Q(i)$, and therefore an \textdef{irrational} complex number is one for which the real or imaginary parts or both are irrational.

Plus complex continued fractions have been studied by Adolf Hurwitz~\cite{Hurwitz87}, Doug Hensley~\cite{Hensley}, and more recently by S.\,G.\,Dani and Arnaldo Nogueira~\cite{DN}, who introduce the terms ``choice function'' and ``iteration sequence.''\footnote{\,The sequence $\{z_n\}$ defined in (\ref{complex cf algorithm}) is an example of an iteration sequence. Dani and Noguiera give conditions for an arbitrary sequence $\{z_n\}$ to be useful for constructing continued fractions, but this paper does not deal with such general iteration sequences.} A \textdef{choice function} is a function $c:\C \to \Zi$ for which $c(0) = 0$ and $\abs{z - c(z)} \le 1$ for all $z$ (that is, $c(z)$ chooses a Gaussian integer that is at most a distance of 1 from $z$). For any given choice function, one can define the set 
\[ \Phi_c = \overline{\setform{z - c(z)}{z \in \C}}. \]
The most classical example is the \textdef{Hurwitz} or \textdef{nearest integer} choice function which maps each complex number to its nearest Gaussian integer (with some convention for points equidistant from multiple nearest Gaussian integers). This algorithm was discussed in detail by Hurwitz~\cite{Hurwitz87}, and in this case the set 
\[ \Phi_\text{Hurwitz} = \setform{ x+yi }{ \abs{x}\le \tfrac12 , \abs{y}\le \tfrac12 } \]
is a unit square centered at the origin.

\begin{remark}
The definition of a choice function implies that $\Phi_c \subset \overline{B(0,1)}$. In many cases, such as Hurwitz, it also also true that $\Phi_c \subset B(0,1)$ or even that $\Phi_c \subset B(0,r)$ for some $r < 1$. There are some number theoretic results that require this additional restriction on $c$, but many do not.
%\footnote{\,For example, Proposition 2.4 of~\cite{DN}: if $\Phi_c \subset B(0,r)$ for some $r < 1$, then the c.f. expansion for a rational complex number terminates.}
%\footnote{\,For example, while it is always true that a periodic c.f.~expansion represents a quadratic surd, the converse, which is always true in $\R$, requires in $\C$ that $\Phi_c \subset B(0,r)$ for some $r<1$.} 
\end{remark}

\pagebreak[1]
One can also construct a choice function starting with a valid set $\Phi$.
\vspace{-1em}

\begin{lem}\label{Phi to c}
	Let $\Phi \subset \overline{B(0,1)}$ contain $0$ and let
	\[ f_\emptyset(z) = \left\{\begin{array}{ll}
		z-1 & \text{if } -\pi/4 \le \arg z < \pi/4 \\
		z-i & \text{if } \pi/4 \le \arg z < 3\pi/4 \\
		z+1 & \text{if } 3\pi/4 \le \arg z \text{ or } \arg z < -3\pi/4 \\
		z+i & \text{if } -3\pi/4 \le \arg z < -\pi/4.
	\end{array}\right. \]
	If for any $z \notin \Phi$ there exists an integer $N_z\ge0$ such that $f_\emptyset^{N_z}(z) \in \Phi$, then the function 
	\[ c_\Phi(z) = \left\{\begin{array}{ll}
		0 & \text{if } z \in \Phi \\
		f_\emptyset^{N_z}(z) & \text{if } z \notin \Phi
	\end{array}\right. \]
	is a valid choice function.%\COMMENT{Does Lemma~\ref{Phi to c} need a rigorous proof?}
\end{lem}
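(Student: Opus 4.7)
The plan is to verify the three requirements of a choice function, as defined earlier in the excerpt: $c_\Phi(0) = 0$, that $c_\Phi$ takes values in $\Zi$, and $|z - c_\Phi(z)| \le 1$ for every $z \in \C$. Since $c_\Phi$ is defined piecewise, I would split each verification into the two cases $z \in \Phi$ and $z \notin \Phi$.

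The case $z \in \Phi$ is essentially immediate: $c_\Phi(z) = 0 \in \Zi$, and $|z - c_\Phi(z)| = |z| \le 1$ follows from the standing hypothesis $\Phi \subset \overline{B(0,1)}$. This also handles the normalization $c_\Phi(0) = 0$, since $0 \in \Phi$ by assumption.

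The substance lies in the case $z \notin \Phi$. The key observation I would use is that each of the four clauses in the definition of $f_\emptyset$ subtracts exactly one Gaussian unit ($1$, $i$, $-1$, or $-i$) from its input, so a one-line induction on $k$ shows that $f_\emptyset^k(z) = z - g_k$ for some $g_k \in \Zi$ and every $k \ge 0$. Applied with $k = N_z$, this gives a Gaussian integer $g := g_{N_z}$ with $z - g = f_\emptyset^{N_z}(z) \in \Phi$ by hypothesis. Reading the definition so that $c_\Phi(z) = g$ (the natural interpretation consistent with the codomain $\Zi$ of a choice function), both the Gaussian-integer property $c_\Phi(z) \in \Zi$ and the bound $|z - c_\Phi(z)| = |f_\emptyset^{N_z}(z)| \le 1$ then follow at once from $f_\emptyset^{N_z}(z) \in \Phi \subset \overline{B(0,1)}$.

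I do not anticipate any serious obstacle here: once the shift-by-a-Gaussian-unit property of $f_\emptyset$ is in hand, the proof is essentially bookkeeping on the difference $z - f_\emptyset^{N_z}(z)$. The only delicate point is keeping straight which quantity plays the role of $c_\Phi(z)$ (the accumulated integer translate) versus which plays the role of the residual $z - c_\Phi(z)$ (the iterate landing in $\Phi$), after which the required inequality reduces to the defining containment $\Phi \subset \overline{B(0,1)}$ built into the hypothesis on $\Phi$.
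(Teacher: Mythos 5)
Your proof is correct, and it is in fact more complete than the paper, which states this lemma without giving a proof. You are also right to flag the wrinkle in the lemma's formula for $c_\Phi(z)$ in the case $z \notin \Phi$: read literally, $c_\Phi(z) = f_\emptyset^{N_z}(z) \in \Phi$, which is generally not a Gaussian integer, so the map could not even have codomain $\Zi$, let alone satisfy $\abs{z - c_\Phi(z)} \le 1$. The intended formula is $c_\Phi(z) = z - f_\emptyset^{N_z}(z)$, which is exactly how the paper itself later uses this construction in Section~\ref{sec Trapping} (there one has $c\NE(w_k) = w_k - f\NE^j(w_k)$ with $j$ the smallest index for which $f\NE^j(w_k) \in \Phi\NE$). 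With that reading, your induction showing $f_\emptyset^k(z) = z - g_k$ with $g_k \in \Zi$ gives $c_\Phi(z) = g_{N_z} \in \Zi$ and $\abs{z - c_\Phi(z)} = \abs{f_\emptyset^{N_z}(z)} \le 1$ from $\Phi \subset \overline{B(0,1)}$, and the $z \in \Phi$ case (including $c_\Phi(0) = 0$) is immediate. One small point worth making explicit: to have $c_\Phi$ be a well-defined function you should fix a convention for $N_z$, e.g.\ the smallest such integer (as the paper does later); the argument works for any consistent choice.
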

%\begin{proof} The ``pieces'' of the piecewise function $f_\emptyset$ are designed to bring points closer to the origin by integer translation. The value $f_\emptyset^{N_z}(z)$ will always be a Gaussian integer, and because $\Phi \subset \overline{B(0,1)}$ it will be an integer within distance $1$ of $z$. \end{proof}
\begin{remark} The two processes of moving from $c$ to $\Phi$ and from $\Phi$ to $c$ are not inverses: for a choice function $h$, the choice function $c_{(\Phi_h)}$ may not be equal to $h$. \end{remark}

%\subsection{Number theory}

For a given choice function $c$, sequences $\{z_n\}$ and $\{a_n\}$ are defined~by
\< \label{complex cf algorithm}
	z_0 = z, \qquad 
	a_n = c(z_n), \qquad
	z_{n+1} = \frac{-1}{z_n-a_n} \quad\text{for }n\ge0.
\>

Dani and Noguiera~\cite{DN} deal exclusively with plus continued fractions, but the relevant statements can be re-stated and re-proved for minus c.f. For example, a version of Lemma~\ref{DN3.3i} below is stated for plus c.f.~as part of Proposition 3.3 in~\cite{DN}, where it has an additional $(-1)^{n+1}$ term on the right-hand side corresponding to the fact that \mbox{$p_{n+1}q_n - q_{n+1}p_n = (-1)^n$} for plus c.f.~but equals 1 for all $n \ge 0$ for minus c.f. 
\begin{lem} \label{DN3.3i}
	Let $\{a_n\}$ and $\{z_n\}$ be sequences satisfying $z_0 = z$ and $z_{n+1} = \frac{-1}{z_n-a_n}$. Define $\{p_n\}$ and $\{q_n\}$ by~(\ref{pq recurrence}).	
	 Then \[ p_n - q_n z = (z_1 \cdots z_{n+1})^{-1} \] for all $n \ge 0$.
\end{lem}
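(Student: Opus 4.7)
The plan is a straightforward induction on $n$, using the two-term recurrence relation~(\ref{pq recurrence}) for $p_n$ and $q_n$.

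For the base case $n=0$, from~(\ref{pq recurrence}) we compute $p_0 = a_0$ and $q_0 = 1$, so
\[
  p_0 - q_0 z = a_0 - z_0 = -\bigl(z_0 - a_0\bigr) = \frac{1}{z_1},
\]
using the defining relation $z_1 = -1/(z_0 - a_0)$. It will also be convenient to record the ``$n=-1$'' case, which holds trivially: $p_{-1} - q_{-1}z = 1 - 0\cdot z = 1$, matching the empty product $(z_1\cdots z_0)^{-1}$.

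For the inductive step, assume the identity holds for indices $n-1$ and $n$; we show it for $n+1$. Using the recurrences $p_{n+1} = a_{n+1}p_n - p_{n-1}$ and $q_{n+1} = a_{n+1}q_n - q_{n-1}$,
\[
  p_{n+1} - q_{n+1}z
  \;=\; a_{n+1}(p_n - q_n z) - (p_{n-1} - q_{n-1} z)
  \;=\; \frac{a_{n+1}}{z_1\cdots z_{n+1}} - \frac{1}{z_1\cdots z_n}.
\]
Factoring out $(z_1\cdots z_{n+1})^{-1}$, this becomes $(z_1\cdots z_{n+1})^{-1}(a_{n+1} - z_{n+1})$. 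Finally, from the definition $z_{n+2} = -1/(z_{n+1}-a_{n+1})$ we have $a_{n+1} - z_{n+1} = 1/z_{n+2}$, and the identity for index $n+1$ follows.

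There is no serious obstacle here; the only point worth noting is that the inductive step uses \emph{two} previous cases (because the recurrence in~(\ref{pq recurrence}) is second-order), so one must either carry both $n=-1$ and $n=0$ as base cases or, equivalently, view the $n=-1$ case as an empty-product convention. The algebraic step of converting $a_{n+1} - z_{n+1}$ into $1/z_{n+2}$ is exactly where the minus-sign in the minus-continued-fraction recurrence~(\ref{pq recurrence}) aligns with the $-1$ in $z_{n+2} = -1/(z_{n+1}-a_{n+1})$, which is why the analogous result for plus continued fractions in~\cite{DN} carries the extra $(-1)^{n+1}$ factor and this one does not.
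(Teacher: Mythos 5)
Your proof is correct and takes essentially the same approach as the paper: induction on $n$ using the second-order recurrence~(\ref{pq recurrence}), with the key step being $a_{n+1}-z_{n+1}=1/z_{n+2}$. You are in fact slightly more careful than the paper, which states its inductive hypothesis only for $k=0,\dots,n-1$ and then tacitly invokes the empty-product value $(z_1\cdots z_{n-1})^{-1}=1$ at $n=1$; your explicit handling of the $n=-1$ case makes the two-term induction airtight.
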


\textit{Proof by induction.}
	For $n=0$, we have $p_0 = a_0$, $q_0 = 1$, and $z_1 = \frac{-1}{z_0 - a_0}$, so by direct calculation $p_0 - q_0 z = z_1^{-1}$.
	Now let $n \ge 1$ and assume $p_k - q_k z = (z_1 \cdots z_{k+1})^{-1}$ for $k = 0, \ldots, n-1$. Then
	\begin{align*}
		p_n - q_n z
		&= (a_n p_{n-1} - p_{n-2}) - (a_n q_{n-1} - q_{n-2}) z \\
		&= a_n p_{n-1} - p_{n-2} - a_n q_{n-1} z + q_{n-2} z \\
		&= a_n (p_{n-1} - q_{n-1} z) - (q_{n-2} z - p_{n-2}) \\
		&= a_n (z_1 \cdots z_n)^{-1} - (z_1 \cdots z_{n-1})^{-1} \\
		&= (z_1 \cdots z_n)^{-1} (a_n - z_n) \\
		&= (z_1 \cdots z_n)^{-1} z_{n+1}^{-1} \\
		&= (z_1 \cdots z_{n+1})^{-1}
\tag*{$\square$}
\end{align*}

\begin{lem} \label{DN3.6}
	Under the setup of Lemma~\ref{DN3.3i}, $\abs{z_1 \cdots z_n} \to \infty$ as $n \to \infty$.
\end{lem}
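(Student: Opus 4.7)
The plan is to rephrase the statement via Lemma~\ref{DN3.3i}: since $|z_1\cdots z_{n+1}|^{-1} = |p_n - q_n z|$, proving $|z_1 \cdots z_n| \to \infty$ is equivalent to proving $|p_n - q_n z| \to 0$. I would work with this reformulation, since the recurrences for $p_n, q_n$ in \eqref{pq recurrence} are more tractable than the products of~$z_m$.

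First I would establish monotonicity. Since the $a_m$ come from a choice function, $|z_m - a_m| \le 1$, whence $|z_{m+1}| = |z_m - a_m|^{-1} \ge 1$. Hence $|z_1\cdots z_n|$ is non-decreasing in $n$ (equivalently, $|p_n - q_n z|$ is non-increasing), so it converges to some limit $\ell \in [0,\infty)$. The task reduces to showing $\ell = 0$.

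To rule out $\ell > 0$, I plan a contradiction argument. If $\ell > 0$ then $|z_{n+1}| = |p_{n-1}-q_{n-1}z|/|p_n - q_n z| \to 1$, forcing $|z_n - a_n| \to 1$, so the points $z_n - a_n$ accumulate on the unit circle. I would combine this with the Euler-type identity $p_{n+1} q_n - q_{n+1} p_n = 1$ (valid for minus c.f.\ and easily verified inductively from \eqref{pq recurrence}) and the recurrence $q_n = a_n q_{n-1} - q_{n-2}$. The Euler identity gives $|p_n/q_n - p_{n-1}/q_{n-1}| = 1/|q_n q_{n-1}|$, so when $|q_n|$ grows, the convergents form a Cauchy sequence in $\C$ with limit $z$ at a rate incompatible with the assumption $|p_n - q_n z| \to \ell > 0$.

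The main obstacle is the degenerate case in which $|q_n|$ fails to grow: if the $a_n$ remain small enough that $(p_n, q_n) \in \Zi^2$ visits only finitely many values, discreteness of $\Zi$ forces eventual periodicity and makes $z$ a fixed point of an iterated M\"obius transformation with coefficients in $\Zi$. Handling this will likely involve either restricting to $z$ irrational over $\mathbb Q(i)$ (so such periodic orbits cannot start from an irrational $z$), invoking the complex c.f.\ convergence theory of Theorem~\ref{complex convergence} referenced in the introduction, or strengthening the hypothesis on $c$ to $\Phi_c \subset B(0,r)$ for some $r < 1$---in which case $|z_{n+1}| \ge 1/r$ gives geometric growth immediately.
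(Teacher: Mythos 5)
The paper does not actually prove this lemma internally --- it simply cites Dani--Nogueira, Proposition 3.6, noting that the plus-c.f.\ argument carries over to the minus case after a sign change. So you are attempting more than the paper does; but your sketch, as written, has a genuine gap that prevents it from being a proof.

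Your reformulation via Lemma~\ref{DN3.3i} and the monotonicity observation ($\abs{z_m}\ge 1$, so $\abs{z_1\cdots z_n}$ is non-decreasing and $\abs{p_n-q_nz}$ is non-increasing, hence converges to some $\ell\ge 0$) are both correct. The problem is the contradiction argument for ruling out $\ell>0$. Everything you propose there hinges on knowing that $\abs{q_n}\to\infty$, either directly (``when $\abs{q_n}$ grows, the convergents form a Cauchy sequence\ldots'') or by citing Theorem~\ref{complex convergence}. But in this paper's logical order, Theorem~\ref{complex convergence} derives $\abs{q_n}\to\infty$ \emph{from} Lemma~\ref{DN3.6}; invoking it here is circular. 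And even granting $\abs{q_n}\to\infty$, the Cauchy bound $\abs{p_n/q_n - p_{n-1}/q_{n-1}} = 1/\abs{q_nq_{n-1}}$ does not by itself force $\abs{q_n}\abs{p_n/q_n - z}\to 0$ without some control on the growth rate of $\abs{q_n}$; slow growth (say $\abs{q_n}\asymp n$) is not obviously incompatible with $\ell>0$.

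The ``degenerate case'' you flag is the real heart of the matter, and your proposed resolutions do not close it. Restricting to irrational $z$ does not rule out the periodic scenario: a $z$ that is a fixed point of a M\"obius map with $\Zi$ coefficients and determinant $1$ is a \emph{quadratic} irrational over $\mathbb Q(i)$, not necessarily an element of $\mathbb Q(i)$, so irrationality alone does not exclude it. Strengthening the hypothesis to $\Phi_c\subset B(0,r)$ with $r<1$ would indeed give $\abs{z_m}\ge 1/r$ and hence geometric growth, but this changes the lemma's hypotheses and, more to the point, excludes the diamond algorithm: $\Phi_\diamond$ meets the unit circle at $\pm1,\pm i$, so no $r<1$ works. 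What actually breaks the degenerate case for the diamond is a sharper observation than generic irrationality --- namely that $\abs{w-c_\diamond(w)}=1$ forces $w-c_\diamond(w)\in\{\pm 1,\pm i\}$ and hence $w\in\Zi$, so an irrational orbit can never sit on the unit circle --- but that argument is specific to this choice function and would still need to be combined with a non-circular growth estimate to finish. The cited DN proof handles this by a more delicate geometric analysis of how a choice function can keep $\abs{z_n}$ near $1$, not by the Cauchy-rate argument you outline.
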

See \cite[Prop.\,3.6]{DN} for a proof for plus continued fractions; this proof applies equally well to minus c.f.~when ``$z_{n+1} = \frac1{\beta_n + \zeta_n}$'' is replaced by ``$-z_{n+1} = \frac1{\beta_n + \zeta_n}$.''

%\begin{proof}
%	Note that $\abs{z_n} \ge 1$ for all $n \ge 1$ because $\abs{z_n - c(z_n)} \le 1$ by the definition of a choice function and $\abs{z_{n+1}} = \abs{z_n - c(z_n)}^{-1}$ by~(\ref{complex cf algorithm}).\?{This sentence copied from later proof. Don't duplicate.}
%	
%	Assume that $\abs{z_1 \cdots z_n}$ does not diverge. Since $\abs{z_n} \ge 1$, this means $\abs{z_n} \to 1$ as $n \to \infty$, which implies that $\abs{z_n - a_n} \to 1$ as well.
%	
%	\begin{cz}[scale=1.5]
%		\draw (-2,0) -- (2,0) node [right] {$\Re$};
%		\draw (0,-2) -- (0,2) node [above] {$\Im$};
%		\draw [black,thin] (0,0) circle (1);
%		\foreach \x in {-2,-1,1,2} {
%			\foreach \y in {-2,-1,1,2} {
%				\fill [gray] (\x,\y) circle (1.5pt); } }
%		\foreach \a in {0,1,...,11} {
%			\fill [black] (\a*360/12:1) circle (2pt); }
%	\end{cz}
%	
%	\?{Incomplete proof.}
%\end{proof}

% Stated as Theorem 2.2 in D-N but proven only as specific case of Theorem 3.7.
\begin{thm} \label{complex convergence}
	Let $c:\C \to \Zi$ be a choice function such that \mbox{$\abs{z \!-\! c(z)} \ne 1$} if $z$ is irrational and $\abs z = 1$. Let $z \in \C$ be irrational, let $\{a_n\}$ be the sequence of defined by~(\ref{complex cf algorithm}) above, and let $\{p_n\}$ and $\{q_n\}$ be defined exactly as in~(\ref{pq recurrence}). Then $q_n \ne 0$ for all $n \ge 0$, the sequence $\{\frac{p_n}{q_n}\}$ converges to $z$, and $\abs{q_n} \to \infty$ as $n \to \infty$.
\end{thm}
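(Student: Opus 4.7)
The plan is to deduce all three claims from Lemmas~\ref{DN3.3i} and~\ref{DN3.6} together with the basic observation that $|z_k| \ge 1$ for every $k \ge 1$, which is immediate from $|z_k| = 1/|z_{k-1} - a_{k-1}|$ and the choice-function inequality $|z_{k-1} - c(z_{k-1})| \le 1$. The natural order is $q_n \neq 0$ first, then $|q_n| \to \infty$, with convergence $p_n/q_n \to z$ falling out as a one-line computation at the end. I also note upfront that irrationality is preserved along the orbit: if $z_k$ is irrational and $a_k \in \Zi$, then $z_{k+1} = -1/(z_k - a_k)$ is irrational too.

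For $q_n \neq 0$, the base case $q_0 = 1$ is immediate from~(\ref{pq recurrence}). For $n \ge 1$, assume toward contradiction that $q_n = 0$. Then Lemma~\ref{DN3.3i} gives $p_n = (z_1 \cdots z_{n+1})^{-1}$, and since $|z_1 \cdots z_{n+1}| \ge 1$, this nonzero Gaussian integer satisfies $|p_n| \le 1$, hence $|p_n| = 1$ with equality throughout: every factor $|z_k|$ for $k \in \{1, \ldots, n+1\}$ must equal $1$. In particular $|z_1| = 1$ and $|z_2| = 1/|z_1 - a_1| = 1$, so $|z_1 - a_1| = 1$. Since $z_1$ is irrational and $a_1 = c(z_1)$, this directly violates the theorem's hypothesis. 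I expect this to be the main obstacle, since the entire point of the boundary hypothesis is precisely to rule out this degenerate possibility that the orbit remains on the unit circle forever.

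For $|q_n| \to \infty$, I will use a finite-pigeonhole argument. If the conclusion fails, extract a subsequence along which $|q_{n_k}|$ is bounded; then $q_{n_k}$ lies in a finite subset of $\Zi$, so after passing to a further subsequence it is constantly equal to some $q \in \Zi$, necessarily nonzero by the previous paragraph. On that subsequence $|p_{n_k}| \le |q|\,|z| + 1$ is also bounded, using Lemma~\ref{DN3.3i} together with $|z_1 \cdots z_{n+1}| \ge 1$, so a further subsequence has $p_{n_k}$ constantly equal to some $p \in \Zi$. Passing to the limit in the identity $p_{n_k} - q z = (z_1 \cdots z_{n_k+1})^{-1}$ and invoking Lemma~\ref{DN3.6} yields $p = qz$, whence $z = p/q \in \mathbb Q(i)$, contradicting irrationality of $z$.

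Finally, $p_n/q_n \to z$ is immediate: with $q_n$ a nonzero Gaussian integer, Lemma~\ref{DN3.3i} gives $|p_n/q_n - z| = 1/(|q_n|\,|z_1 \cdots z_{n+1}|) \le 1/|z_1 \cdots z_{n+1}|$, which tends to $0$ by Lemma~\ref{DN3.6} (one does not actually need $|q_n| \to \infty$ for this step, only $|q_n| \ge 1$).
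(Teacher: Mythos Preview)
Your proof is correct and follows essentially the same approach as the paper: both arguments rest on Lemmas~\ref{DN3.3i} and~\ref{DN3.6}, the observation $|z_k|\ge 1$, and the boundary hypothesis applied at $z_1$ to rule out the degenerate case. The only organizational differences are that the paper first isolates the strict inequality $|z_1\cdots z_{n+1}|>1$ for $n\ge 1$ (rather than deriving $|z_1|=|z_2|=1$ inside the $q_n=0$ contradiction as you do) and proves convergence before $|q_n|\to\infty$; your explicit subsequence treatment of the latter is in fact slightly more careful than the paper's phrasing.
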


\begin{proof}
	Note that $\abs{z_n} \ge 1$ for all $n \ge 1$ because $\abs{z_n - c(z_n)} \le 1$ by the definition of a choice function and $\abs{z_{n+1}} = \abs{z_n - c(z_n)}^{-1}$ by~(\ref{complex cf algorithm}). From this, we have that \mbox{$\abs{z_1 \cdots z_{n+1}} \ge 1$} for any $n \ge 1$, but in fact a slightly stronger statement is true.
	\[
		\abs{z_1}
		= \abs{\frac{-1}{z_0 - a_0}} \\
		= \abs{z_0 - a_0}^{-1}
		= \abs{z - c(z)}^{-1}
	\]
	Suppose $\abs{z - c(z)} < 1$. Then $\abs{z_1} > 1$, and $\abs{z_1 \cdots z_{n+1}} \ge 1$ can be strengthened to $\abs{z_1 \cdots z_{n+1}} > 1$ for any $n \ge 0$.
	
	Now suppose $\abs{z - c(z)} = 1$. Then $\abs{z_1} = 1$, which by the assumption of the theorem means $\abs{z_1 - c(z_1)} \ne 1$ and thus $\abs{z_1 - c(z_1)} < 1$. Then % blank line here to facilitate page breaking well.
	
	\[
		\abs{z_2} 
		= \abs{\frac{-1}{z_1 - a_1}} 
		= \abs{z_1 - a_1}^{-1}
		= \abs{z_1 - c(z_1)}^{-1}
		> 1.
	\]
	Thus $\abs{z_1 \cdots z_{n+1}} > 1$ for any $n \ge 1$ no matter whether $\abs{z - c(z)} < 1$ or not.
	
	By Lemma~\ref{DN3.3i}, $p_n - q_n z = (z_1 \cdots z_{n+1})^{-1}$ for all $n \ge 1$ and thus $0 \le \abs{p_n - q_n z} < 1$ for all $n \ge 1$. However $\abs{p_n - q_n z} = 0$ is impossible because then $z = \frac{p_n}{q_n}$, contradicting the condition in the theorem that $z$ be irrational. Thus $0 < \abs{p_n - q_n z} < 1$ for all $n \ge 1$.
	Recall $q_0 = 1$. If $q_k = 0$ for some $k \ge 1$, then $\abs{p_k - q_k z}$ would equal just $\abs{p_k}$. Since $p_k \in \Zi$, it cannot be that $0 < \abs{p_k} < 1$; thus $q_n \ne 0$ for all $n \ge 0$.
	\begin{align*}
		\abs{\frac{p_n}{q_n}-z}
		&= \abs{\!\left(\frac{p_n}{q_n}-z\right)\!q_n} \abs{q_n}^{-1}
		\\&= \abs{p_n - q_n z} \abs{q_n}^{-1}
		= \abs{z_1 \cdots z_{n+1}}^{-1} \abs{q_n}^{-1}
	\end{align*}
	Knowing that $\abs{z_1 \cdots z_{n+1}}^{-1} \to 0$ from Lemma~\ref{DN3.6} and that $\abs{q_n}^{-1}$ is bounded (by~1 since \mbox{$q_n \in \Zi\!\setminus\!\{0\}$} and thus $\abs{q_n} \ge 1$), we have that $\big\lvert\,\frac{p_n}{q_n}-z\,\big\rvert \to 0$. Therefore the sequence $\{p_n/q_n\}$ converges to $z$.
	
	Lastly, assume %$\{q_n\}$ is bounded by $M$, so 
	$\abs{q_n} \le M$. Then $1 \ge \abs{q_n}^{-1} \ge 1/M$ and 
	\[ \abs{p_n} \ge \frac{\abs{p_n}}{\abs{q_n}} \ge \frac{\abs{p_n}}M. \]
	If $\abs{p_n}$ diverges, then $\abs{p_n}/M$ diverges as well, but we know $\abs{p_n/q_n}$ converges to~$\abs z$. Thus $\abs{p_n}$ must converge. A converging sequence from a discrete set must be eventually constant. If $\abs{p_n}$ is constant for all $n > N$, then $\{p_n\}_{n\ge N}$ has only a finite set of values, and since $\abs{q_n}$ is bounded, $\{q_n\}_{n\ge N}$ also has only finitely many values. This means that the set $\setform{ p_n/q_n }{n\ge N}$ is finite. A converging sequence from a finite set must eventually equal its limit, so $z$ must equal exactly $p_n/q_n$ for some $n$.
	%
	%However, if $p_n = p$\?{We know $\abs{p_n}$ converges, but $p_n$ itself might not.} for all $n \ge N$, the recurrence relation
	%\[ p_n = a_n p_{n-1} - p_{n-2} \]
	%implies that for all $n \ge N+2$, $p = a_n p - p$ and thus $a_n = 2$. The minus continued fraction $\cf{2,2,2,\ldots} = 1$, so any $\cf{a_0,a_1,\ldots,a_N,a_{N+1},2,2,2,\ldots}$ actually represents a rational number. For an irrational $z$, then, it must be that $\{q_n\}$ is not bounded and thus that $\abs{q_n} \to \infty$.
	%
	However, this contradicts the irrationality of $z$. For an irrational $z$, then, it must be that $\{q_n\}$ is not bounded and thus that $\abs{q_n} \to \infty$.
\end{proof}

\section{Diamond algorithm and its partition}\label{sec Diamond}

The remainder of this paper deals exclusively with the ``diamond algorithm,'' which uses the fundamental set
\<[Phi diamond] \Phi\NE := \setform{ x+yi }{ \abs{x} + \abs{y} \le 1 } \>
and the choice function $c\NE := c_{\Phi\NE}$ defined as described in Lemma~\ref{Phi to c}. This algorithm was first described by Julius Hurwitz in 1902 \cite{HurwitzJ}.

The three maps
\[ T(z) = z+1, \qquad U(z) = z+i, \qquad S(z) = \frac{-1}z \]
and their inverses are the basis for various transformations related to complex continued fractions, including the piecewise continuous map $f\NE:\C\to\C$ defined as
\< \label{fNE}
	f\NE(z) = \left\{\begin{array}{ll}
		Sz & \text{if }z \in \Phi\NE \\
		T^{-1}z & \text{if } -\pi/4 \le \arg z < \pi/4 \\
		U^{-1}z & \text{if } \pi/4 \le \arg z < 3\pi/4 \\
		Tz & \text{if } 3\pi/4 \le \arg z \text{ or } \arg z < -3\pi/4 \\
		Uz & \text{if } -3\pi/4 \le \arg z < -\pi/4.
	\end{array}\right.
\>
The ``pieces'' of this piecewise definition are designed to bring any point $z \notin \Phi\NE$ closer to the origin by integer translation until it enters the set $\Phi\NE$ and is inverted. Figure~\ref{fig diamond regions} shows the regions of $\C$ on which this function acts by different maps ($\Phi\NE$ is shaded in the figure). A set is called \textdef{consistent} if the map $f\NE$ acts on all points in the set by the same generator, meaning that the set is contained in only one of these regions.
\def\Inf{2}
\begin{figure}[h]
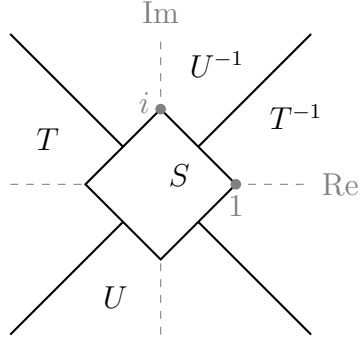

\begin{cz}
	\draw [gray,dashed] (-\Inf,0) -- (\Inf,0) node [right] {$\Re$};
	\draw [gray,dashed] (0,-\Inf) -- (0,\Inf) node [above] {$\Im$};
	\draw [thick] (-\Inf,-\Inf) -- (\Inf,\Inf);
	\draw [thick] (-\Inf,\Inf) -- (\Inf,-\Inf);
	\draw [fill=white,thick] (1,0) -- (0,1) -- (-1,0) -- (0,-1) -- cycle;
	\fill [gray] (1,0) circle (2pt) node [below] {1};
	\fill [gray] (0,1) circle (2pt) node [above=.2em,left] {$i$};
	
	\draw (1/4,1/8) node [fill=white] {$S$};
	\draw (1.8,0.9) node [fill=white] {$T^{-1}$};
	\draw (0.75,1.6) node [fill=white] {$U^{-1}$};
	\draw (-1.5,.6) node [fill=white] {$T$};
	\draw (-.6,-1.5) node [fill=white] {$U$};
\end{cz}
\caption{Action of $f\NE$ in different regions}
\label{fig diamond regions} % Must go after caption.
\end{figure}

The natural extension map of $f\NE$ is the map $F\NE:\CCnoD\to\CCnoD$, where $\Delta = \setform{(z,w)\in\C^2}{z=w}$, given by
\< \label{FNE}
	F\NE(z,w) = \left\{\begin{array}{ll}
		(Sz,Sw) & \text{if }w \in \Phi\NE \\
		(T^{-1}z,T^{-1}w) & \text{if } \tfrac{-1}4\pi \le \arg w < \tfrac14\pi \\[0.15em]
		(U^{-1}z,U^{-1}w) & \text{if } \tfrac14\pi \le \arg w < \tfrac34\pi \\[0.25em]
		(Tz,Tw) & \text{if } \tfrac34\pi \le \arg w \text{ or } \arg w < \tfrac{-3}4\pi \\[0.2em]
		(Uz,Uw) & \text{if } \tfrac{-3}4\pi \le \arg w < -\tfrac14\pi
	\end{array}\right.
\>
and is the main object of study for the remainder of this paper.

%\subsection{Partition}

Analogous to the ``finite rectangular structure'' attractor region for the real natural extension map $F_{a,b}$ described in~\cite{KU10,KU12}, the goal is to find an invariant set $D\NE$ for the map $F\NE$ that has \textdef{finite product structure}, \mbox{meaning it can be expressed as a finite union of Cartesian products:}
\< \label{DNE FPS} D\NE = \bigcup_{k=1}^N (Z_k \times W_k) \>
and each set $W_k$ is consistent.
This is accomplished by forming a finite partition of~$\C$ satisfying the following properties:
\begin{enumerate}[(i)]
	\item the set $\Phi\NE$ is a union of sets from this partition; \\[-1.4em]
	\item the image under $f\NE$ of any set in the partition is a union of sets from this partition. \\[-1.4em]
\end{enumerate}
The natural way to do this is to look at the all images of $\Phi\NE$ under $f\NE$ and take a partition fine enough to describe every one of these images as a union of partition elements% (that is, cut $\C$ by every boundary of $f\NE^n \Phi\NE$)%
. A priori, there is no reason to think that such a partition will be finite or have any nice presentation, and indeed it is unclear what can be said for a generic choice function or fundamental set. In the specific case of $f\NE$, however, this partition is quite nice (see Figure~\ref{fig C partition}).

Property (\ref{partition image prop}) above implies that the image under $F\NE$ of any set with finite product structure must also have finite product structure.
\begin{lem}\label{finiteness}
	Let $W_1,\ldots,W_N$ be a collection of sets such that for each $k$ there exists a set of indices $J_k \subset \{1,\ldots,N\}$ satisfying $f\NE(W_k) = \bigcup_{j\in J_k} W_j$. Then the image under $F\NE$ of any set with finite product structure must also have finite product structure.
\end{lem}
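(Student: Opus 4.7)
My plan is a direct decomposition argument. I would first write a finite-product-structure set $D$ as a finite union $D = \bigcup_{k=1}^M (Z_k \times W_{j_k})$ of rectangles whose second factors come from the given family $\{W_1,\ldots,W_N\}$. This is the natural reading of finite product structure in this lemma, since the partition $\{W_j\}$ is the vehicle by which the abstract consistency condition becomes operational; if a consistent second factor in $D$ were not already one of the $W_j$, the partition by construction refines it into a finite union of such pieces, which splits the rectangle into finitely many rectangles of the required form.

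Next, I would use the distributivity of images under unions, so that
\[ F\NE(D) = \bigcup_{k=1}^M F\NE(Z_k \times W_{j_k}). \]
The key structural observation is that each $W_{j_k}$ is consistent, so $f\NE$ restricts on $W_{j_k}$ to a single generator $g_k \in \{S,\, T,\, T^{-1},\, U,\, U^{-1}\}$. Then $F\NE$ in turn acts on the rectangle $Z_k \times W_{j_k}$ by the diagonal map $(z,w) \mapsto (g_k z,\, g_k w)$, so
\[ F\NE(Z_k \times W_{j_k}) = g_k(Z_k) \times g_k(W_{j_k}) = g_k(Z_k) \times f\NE(W_{j_k}). \]
The closure hypothesis $f\NE(W_{j_k}) = \bigcup_{\ell \in J_{j_k}} W_\ell$ then lets me rewrite the right-hand side as $\bigcup_{\ell \in J_{j_k}} \bigl(g_k(Z_k) \times W_\ell\bigr)$.

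Assembling over $k$, I obtain
\[ F\NE(D) = \bigcup_{k=1}^M \bigcup_{\ell \in J_{j_k}} \bigl(g_k(Z_k) \times W_\ell\bigr), \]
a finite union of Cartesian products whose second factors all lie in $\{W_1,\ldots,W_N\}$ and are therefore consistent. This is exactly finite product structure.

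I do not expect a significant obstacle. The only point requiring any care is the preliminary reduction to rectangles whose second factor is a single partition element $W_{j_k}$ rather than a general consistent set; once this is in hand, the remainder of the argument is the elementary observation that the diagonal action of a single generator respects the Cartesian product and commutes with finite unions, together with a single application of the closure hypothesis.
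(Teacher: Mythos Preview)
Your proposal is correct and follows essentially the same route as the paper: pick the single generator $g_k$ acting on each consistent piece $W_{j_k}$, use that $F\NE$ acts diagonally on the corresponding rectangle, apply the closure hypothesis $f\NE(W_{j_k}) = \bigcup_{\ell\in J_{j_k}} W_\ell$, and distribute. The only difference is cosmetic: you are explicit about the preliminary reduction to rectangles whose second factor is one of the given $W_j$, whereas the paper silently writes $A = \bigcup_{k=1}^N (Z_k\times W_k)$ with the same index set as the partition and proceeds from there.
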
 
\begin{proof}
For each $k$, let $h_k$ be one of the maps $T,T^{-1},U,U^{-1},S$ chosen so that $f\NE$ acts on $W_k$ by the map $h_k$. Thus $f\NE(W_k) = h_k(W_k)$ and moreover $F\NE(Z_k \times W_k) = h_k(Z_k) \times h_k(W_k)$.

Let $A = \bigcup_{\,k=1}^{\,N} (Z_k \times W_k)$ be some set with finite product structure.
\begin{align*}
	F\NE(A) &= F\NE\Big( \bigcup_{1\le k\le N} (Z_k \times W_k) \Big) \\[-.25em]
	&= \bigcup_{1\le k\le N} F\NE(Z_k \times W_k) \\[-.15em]
	&= \bigcup_{1\le k\le N} h_k(Z_k) \times h_k(W_k) \\[-.15em]
	&= \bigcup_{1\le k\le N} \Big( h_k(Z_k) \times \bigcup_{j\in J_k} W_j \Big) \\[-.15em]
	%&= \bigcup_{1\le k\le N} \Big( \bigcup_{j\in J_k} (h_k(Z_k) \times W_j) \Big) \\[-.15em]
	&= \bigcup_{1\le k\le N} \bigcup_{j\in J_k} (h_kZ_k \times W_j)
\end{align*}
Since each $J_k$ is finite, the double-union over $k\in\{1,\ldots,N\}$ and $j \in J_k$ is still a finite union.
\end{proof}

From the eventual construction, it will be seen that equation~(\ref{DNE FPS}) can be satisfied with $N=40$ sets, but a more compact description can be given using symmetry. The set $\Phi\NE$ is a diamond, which has symmetry group $\Dih$ (the dihedral group of degree 4 and order 8), and it will be convenient to do calculations with in the quotient space $\C/\Dih$, which is naturally identified with the ``wedge-shaped'' set of points $\setform{w\in\C\!}{\!0\le\arg w<\pi/4}$, which we will denote by $\C^*$.

Define the following sets, which can be seen in Figure~\ref{fig C partition}.
\begin{align}
	W_1 &= \hspace{.17em}\setform{ w \in \C^* }{ \Im\,w \le \Re\,w - 1 } \notag \\
	W_2 &= \setform{ w \in \C^* }{ \Im\,w \ge \Re\,w - 1, \abs{w-(\tfrac12+\tfrac12i)} \ge \tfrac1{\sqrt2} } \notag \\
	W_3 &= \setform{ w \in \C^* }{ \Im\,w \ge 1 - \Re\,w, \abs{w-(\tfrac12+\tfrac12i)} \le \tfrac1{\sqrt2} } \label{Wk} \\
	W_4 &= \setform{ w \in \C^* }{ \Im\,w \le 1 - \Re\,w, \abs{w-(\tfrac12-\tfrac12i)} \ge \tfrac1{\sqrt2} } \notag \\
	W_5 &= \setform{ w \in \C^* }{ \abs{w-(\tfrac12-\tfrac12i)} \le \tfrac1{\sqrt2} } \notag
\end{align}

\begin{figure}[h]
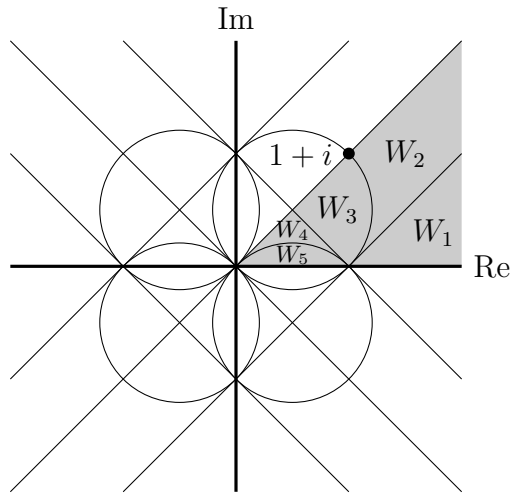

\begin{cz}[scale=1.5]
	\fill [black!20] (\Inf,\Inf) -- (0,0) -- (\Inf,0);

	\draw (-\Inf,-\Inf) -- (-1/2,-1/2);
	\draw (1/2,1/2) -- (\Inf,\Inf);
	\draw (-\Inf,\Inf) -- (-1/2,1/2);
	\draw (1/2,-1/2) -- (\Inf,-\Inf);
	\draw (-1,0) -- (0,1) -- (1,0) -- (0,-1) -- cycle;
	\draw [very thick] (-\Inf,0) node [left,white] {$\Re$} -- (\Inf,0) node [right] {$\Re$}; % left node for centering
	\draw [very thick] (0,-\Inf) -- (0,\Inf) node [above] {$\Im$};;
	
	\foreach \r in {0,90,180,270} {
		\draw [rotate=\r] (1/2,1/2) circle (\sval);
		\draw [rotate=\r] (\Inf,\Inf-1) -- (1,0) -- (0,0) -- (0,1) -- (\Inf-1,\Inf);
		\draw [rotate=\r] (0,0) -- (1/2,1/2);
	}
	
	\draw (1.75,.3) node {$W_1$};
	\draw (1.5,1) node {$W_2$};
	\draw (0.9,.5) node {$W_3$};
	\draw (0.5,.32) node {\scriptsize$W_4$};
	\draw (0.5,.1) node {\scriptsize$W_5$};
	
	\fill (1,1) circle (1.5pt) node [below=0.08em,left] {$1+i$\,};
\end{cz}
\caption{Partition of $\C$ with $\C^*$ in gray}
\label{fig C partition} % Must go after caption.
\end{figure}

The sets $W_1,\ldots,W_5$ partition the wedge $\setform{w\in\C\!}{\!0\le\arg w<\pi/4}$ (ignoring overlap on their boundaries), and thus the collection of sets $\xi W_k$ with $\xi \in \Dih$ and $k=1,2,3,4,5$ form a partition of $\C$ itself, shown in Figure~\ref{fig C partition}. Since $\abs{\Dih} = 8$, there are $8 \times 5 = 40$ total sets in this partition, but all relevant calculations can be carried out using only the~5 ``standard'' sets given above.

\pagebreak[10]
\begin{lem} \label{partition finiteness}
	The partition $\setform{ \xi W_k }{ \xi \in \Dih, 1 \le k \le 5 }$ satisfies the following: 
	\begin{enumerate}[(i)]
		\item\label{partition Phi prop} the set $\Phi\NE$ is a union of sets from this partition;
		\item\label{partition image prop} the image under $f\NE$ of any set in the partition is a union of sets from this partition.
	\end{enumerate}
\end{lem}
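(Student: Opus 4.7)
The plan is to verify both claims by direct geometric computation, leveraging the $\Dih$-symmetry of the partition to reduce part~(\ref{partition image prop}) from $40$ cases to just~$5$. For part~(\ref{partition Phi prop}) I first note that $\Phi\NE\cap\C^*$ is the triangle with vertices $0$, $1$, and $\tfrac12+\tfrac12 i$. The disk $|w-(\tfrac12-\tfrac12 i)|\le\tfrac1{\sqrt2}$ used in the definitions of $W_4,W_5$ is tangent to the hypotenuse $\Re w+\Im w=1$ at the point $1$ (the distance from $\tfrac12-\tfrac12 i$ to the line equals $\tfrac1{\sqrt2}$) and lies entirely on the triangle's side of the line, so $W_4\cup W_5=\Phi\NE\cap\C^*$. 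Since $\C^*$ is a fundamental domain for the $\Dih$-action on $\C$ and $\Phi\NE$ is $\Dih$-invariant, applying each $\xi\in\Dih$ gives $\Phi\NE=\bigcup_{\xi\in\Dih}\xi(W_4\cup W_5)$.

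The symmetry reduction for part~(\ref{partition image prop}) rests on the observation that $f\NE(\xi W_k)$ is always a $\Dih$-translate of $f\NE(W_k)$. If $k\in\{1,2,3\}$ then $W_k\subset\C^*$ lies in the $T^{-1}$-region and $\xi W_k\subset\xi\C^*$ lies in the region where $f\NE$ acts by the conjugate generator $\xi T^{-1}\xi^{-1}$---for instance, writing $r$ for rotation by $\pi/2$, one checks $rT^{-1}r^{-1}=U^{-1}$, matching the generator assigned to $r\C^*$. This gives $f\NE(\xi W_k)=\xi\,f\NE(W_k)$. If $k\in\{4,5\}$ then $\xi W_k\subset\Phi\NE$, so $f\NE(\xi W_k)=S(\xi W_k)=\psi(\xi)\,f\NE(W_k)$ where $\psi(\xi):=S\xi S^{-1}$; direct computation using $S(w)=-1/w$ verifies that $\psi$ is an automorphism of $\Dih$ (e.g., $SrS=r^{-1}$, and $S$ commutes with complex conjugation). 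So once $f\NE(W_k)=\bigcup_j \eta_j W_{k_j}$ is established as a union of partition pieces for $k=1,\ldots,5$, the decomposition for every $f\NE(\xi W_k)$ follows automatically.

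It then remains to compute $T^{-1}(W_k)=W_k-1$ for $k=1,2,3$ and $S(W_k)=-1/W_k$ for $k=4,5$, and match each image to a union of partition sets. The crucial geometric input is that the two disks $|w-(\tfrac12\pm\tfrac12 i)|=\tfrac1{\sqrt2}$ appearing in $W_3$ and $W_5$ both pass through the origin (e.g., $|0-(\tfrac12-\tfrac12 i)|=\tfrac1{\sqrt2}$), so $S$ sends their boundary circles to straight lines; a short calculation shows those lines are exactly $\Dih$-translates of the linear boundaries appearing in $W_1$ and $\partial\Phi\NE$. Conversely, the boundary line $\Re w+\Im w=1$ of $\Phi\NE$ does not pass through the origin, so $S$ sends it to a circle that turns out to be a $\Dih$-translate of the $W_3$-disk. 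These coincidences are exactly what makes the five images fit together from partition pieces.

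The main obstacle is the delicate bookkeeping in these five image calculations: each image typically crosses several of the eight $\Dih$-wedges---for instance $W_2-1$ occupies the angular range $[\pi/4,\pi]$ and so spans three wedges---and each resulting piece must be matched with a specific $\xi W_j$. The tangency and incidence relations noted above, especially the tangency of $\partial\Phi\NE$ to the $W_5$-disk at the point $1$, are what prevent small leftover lunes or stray arcs from spoiling the match.
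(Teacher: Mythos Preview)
Your plan is essentially the paper's own: reduce to the five representative pieces via the $\Dih$-equivariance of $f\NE$, then verify the five image decompositions by hand. Your justification of the symmetry reduction (the conjugation identities $\xi T^{-1}\xi^{-1}$ and $S\xi S^{-1}\in\Dih$) is in fact more explicit than what the paper writes; the paper simply asserts that checking $W_1,\ldots,W_5$ suffices and then records the five equalities
\[
T^{-1}W_1=W_1\cup\cdots\cup W_5,\quad T^{-1}W_2=\dflip W_2\cup\dflip W_3\cup\dflip W_4,\quad T^{-1}W_3=\dflip W_5\cup\rot W_5\cup\rot W_4,\quad SW_4=\hflip W_2,\quad SW_5=\hflip W_1.
\]

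One small correction to your final paragraph: the example you chose is misleading. The translate $W_2-1$ does \emph{not} span the angular range $[\pi/4,\pi]$; the triangle with vertices $0,-1,i$ (the part of the strip $\{0\le\Im w,\ \Re w\le\Im w<\Re w+1\}$ with $\Re w<0$) lies entirely inside the shifted disk $|w-(-\tfrac12+\tfrac12 i)|\le\tfrac1{\sqrt2}$ and is therefore removed by the circle constraint. What remains sits in the single wedge $\dflip\,\C^*=\{\pi/4\le\arg w\le\pi/2\}$, matching the paper's formula $T^{-1}W_2=\dflip(W_2\cup W_3\cup W_4)$. So the bookkeeping is a bit lighter than you anticipated, but your outline is otherwise on target.
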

\begin{proof}
	The set $W_4 \cup W_5$ is the projection of $\Phi\NE$ to $\C/\Dih$, that is,
	\[ \Phi\NE = \bigcup_{\xi\in\Dih}\!\xi W_4 \;\cup \bigcup_{\xi\in\Dih}\!\xi W_5, \]
	which proves (\ref{partition Phi prop}).
	
	To prove (\ref{partition image prop}), it is sufficient to express the images of $W_1, \ldots, W_5$ as unions of partition elements:
	\<[fNE Wk]
	\begin{array}{r@{\;=\;}l@{\;=\;}l}
		f\NE(W_1) & T^{-1}W_1 & W_1 \cup W_2 \cup W_3 \cup W_4 \cup W_5 \\
		f\NE(W_2) & T^{-1}W_2 & \dflip W_2 \cup \dflip W_3 \cup \dflip W_4 \\
		f\NE(W_3) & T^{-1}W_3 & \dflip W_5 \cup \rot W_5 \cup \rot W_4 \\
		f\NE(W_4) & SW_4 & \hflip W_2 \\
		f\NE(W_5) & SW_5 & \hflip W_1,
	\end{array} \>
	where $\dflip: w \mapsto i\,\overline{w}$ (reflection across $\Im\,w = \Re\,w$), $\rot: w \mapsto i\,w$ (counter\-clockwise rotation by 90$^\circ$), and $\hflip : w \mapsto -\overline{w}$ (reflection across the $\Im$-axis) are all elements of $\Dih$.
\end{proof}

\section{Bijectivity domain}\label{sec Bij}

We can now use the sets $W_k$ in the partition to describe a bijectivity domain for $F\NE$.

\begin{thm} \label{MAIN}
	There exists a set $D\NE \subset \CCnoD$ such that
	\begin{enumerate}[(i)]
		\item\label{MAIN bij prop} $D\NE$ is a \textdef{bijectivity domain}, meaning that $F\NE(D\NE) = D\NE$ and the map $F\NE : D\NE \to D\NE$ is bijective except on parts of the boundary of $D\NE$;
		\item\label{MAIN fps prop} $D\NE$ has finite product structure: there exists a finite collections of sets $Z_1,W_1,\ldots,Z_N,W_N \subset \C$ such that
		\[ \tag{\ref{DNE FPS}} D\NE = \bigcup_{k=1}^N (Z_k \times W_k); \]
		\item\label{MAIN set prop} each set $Z_k$ and $W_k$ in~(\ref{DNE FPS}) can be given explicitly, and each is a connected set whose boundary consists of straight lines (infinite or segments) and arcs of circles.
	\end{enumerate}
\end{thm}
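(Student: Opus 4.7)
The plan is to adopt the ansatz
\[ D\NE \;=\; \bigcup_{\xi \in \Dih}\; \bigcup_{k=1}^{5} \; (\xi Z_k) \times (\xi W_k), \]
indexed by the $40$ partition elements of Lemma~\ref{partition finiteness}, where $Z_1,\ldots,Z_5 \subset \C$ are five ``standard'' companion sets to be determined and the remaining components are obtained by applying $\xi \in \Dih$ to both coordinates. This ansatz realizes property~(\ref{MAIN fps prop}) by construction, and because $\Phi\NE$ and the map $f\NE$ are equivariant with respect to $\Dih$, the problem reduces to pinning down just $Z_1, \ldots, Z_5$.

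The invariance requirement $F\NE(D\NE) = D\NE$ is unpacked using~(\ref{fNE Wk}). Since $f\NE$ acts on $W_k$ by a single generator $h_k \in \{T, T^{-1}, U, U^{-1}, S\}$, one has
\[ F\NE\bigl((\xi Z_k) \times (\xi W_k)\bigr) \;=\; (\xi h_k Z_k) \times (\xi h_k W_k), \]
and the second factor decomposes as a union of partition elements via the appropriate line of~(\ref{fNE Wk}) and its $\Dih$-conjugates. Matching coefficients of partition elements on the two sides of $F\NE(D\NE) = D\NE$ produces a finite system of set equations: for each partition element $\eta W_{k'}$ appearing in some image, the corresponding $\eta Z_{k'}$ must equal the disjoint union (up to boundary) of all the sets $h_k(Z_k)$ coming from branches that land on $\eta W_{k'}$. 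I would write this system out explicitly, guess a seed (for instance $Z_k = \Phi\NE$) and iterate $F\NE$ and $F\NE^{-1}$ until the picture stabilizes, producing the explicit $Z_k$ demanded by~(\ref{MAIN set prop}).

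Property~(\ref{MAIN set prop}) is essentially free from this procedure: each generator $T, U, S$ is a M\"obius transformation, hence sends lines and circles to lines or circles. Any finite iterate of the refinement therefore has boundary made of straight segments and circular arcs, and the $Z_k$ inherit this form. Property~(\ref{MAIN bij prop}) then follows almost immediately: surjectivity $F\NE(D\NE) = D\NE$ is exactly the system of set equations just solved, and injectivity holds because the partition element containing $w$ uniquely determines which of the five generators was last applied, so the inverse branch of $F\NE$ is unambiguous on the interior of $D\NE$.

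The main obstacle I anticipate is showing that the iterative refinement actually terminates, with the resulting $Z_k$ described by finitely many boundary pieces, and that the unions in the set equations above are genuinely disjoint. The inversion $S$ can in principle generate a new circular arc each time it is applied, so one must identify in advance a small family of lines and circles --- essentially the unit circles centred at $0, \pm 1, \pm i$ together with the coordinate axes, their diagonals, and their $\Dih$-orbits --- and verify that this family is closed under the inverse branches needed to build each $Z_k$. Once this closure of the arc-family is established, verifying the set equations and the boundary structure is a matter of bookkeeping across the five standard pieces.
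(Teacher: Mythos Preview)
Your proposal is correct and follows essentially the same route as the paper: the $\Dih$-equivariant ansatz with five standard pieces, the reduction via~(\ref{fNE Wk}) to a finite system of set equations $\hat Z_k = Z_k$, and the M\"obius-transformation argument for the boundary structure are exactly what the paper does. The only difference is that the paper writes down the explicit $Z_k$ (equations~(\ref{Zk})) and verifies the five identities $\hat Z_k = Z_k$ directly, sidestepping your anticipated termination obstacle entirely; your iterative discovery procedure would lead to the same sets, but the paper's presentation makes the closure of the arc family a non-issue since only a single round of verification is needed.
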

\begin{remark}
	Ideally, $D\NE$ should actually be an attractor region, which would require that the orbit of almost any point $(z,w)$ in $\CCnoD$ enters $D\NE$ in finite time or at least asymptotically. This has yet to be proved or disproved. Section~\ref{sec Trapping} describes a set $\Psi\NE \supset D\NE$ with this property.
\end{remark}

The proof of Theorem~\ref{MAIN} consists of descriptions of the sets $Z_k$ and $W_k$ satisfying~(\ref{MAIN set prop}), defining the set $D\NE$ as in~(\ref{MAIN fps prop}), and calculations to prove~(\ref{MAIN bij prop}). This information is presented below along with the method used to construct and determine these sets; see the left half of Figure~\ref{fig DNE} for a visualization of $D\NE$.

% \? The two properties and proof that F\NE(FPS) = FPS was originally here but was moved because it can be applied to maps other than NE.

The sets $Z_1,\ldots,Z_5$ for equation (\ref{DNE FPS}) are the following:
\begin{align}
	Z_1 &= \setform{ z \in \C }{ \Re\,z \le \tfrac12, \abs{\Im\,z} \le \tfrac12, \abs{z-1} \ge 1 } \notag \\
	Z_2 &= \setform{ z \in \C }{ \Re\,z \le \tfrac12,\, \Im\,z \le \tfrac12, \abs{z-1} \ge 1 } \notag \\
	Z_2 &= \setform{ z \in \C }{ \Re\,z \le \tfrac12,\, \Im\,z \le \tfrac12, \abs{z} \ge 1 } \label{Zk} \\
	Z_4 &= \setform{ z \in \C }{ \Re\,z \le \tfrac12, \abs{z} \ge 1, \abs{z-i} \ge 1 } \notag \\
	Z_5 &= \setform{ z \in \C }{ \Re\,z \le \tfrac12, \abs{z} \ge 1, \abs{z-i} \ge 1, \abs{z+i} \ge 1 } \notag
\end{align}
Using $W_1,\ldots,W_5$ as defined in (\ref{Wk}) and $Z_1,\ldots,Z_5$ as defined in (\ref{Zk}), we define the set $D\NE$ as 
\< \label{DNE FPS Dih} D\NE = \bigcup_{k=1}^5 \bigcup_{\xi \in \Dih} \xi(Z_k \times W_k), \>
where $\Dih$ acts on $\C^2$ by $\xi(z,w) = (\xi z,\xi w)$. Equation~(\ref{DNE FPS Dih}) is a restatement of (\ref{DNE FPS}) with $Z_6,\ldots,Z_{40}$ each being an image of one of $Z_1,\ldots,Z_5$ under an element of $\Dih$ and likewise for $W_5,\ldots,W_{40}$. The set $D\NE$ can most easily be visualized using the five products on the left of Figure~\ref{fig DNE}. \textbf{Note that each~product $\boldsymbol{Z\!\times\!W}$ shown in Figure~\ref{fig DNE} actually represents $\boldsymbol{\bigcup_{\xi\in\Dih}\xi(Z\!\times\!W)}$.}

\newcommand{\Product}[4][scale=1.28]{ % *** 1.38 for article
\begin{tikzpicture}[#1]
	\def\Inf{3}
	\begin{scope}[scale=1/\Inf]
		\begin{scope} \clip (-\Inf,-\Inf) rectangle (\Inf,\Inf); #3 \end{scope}
		
		\draw [black, thick] (-\Inf,0) -- (\Inf,0);
		\draw [black, thick] (0,-\Inf) -- (0,\Inf);
		 
		\draw [black,opacity=.25] (-\Inf,-\Inf) grid (\Inf,\Inf);
		% Partition
%		\draw [blue,opacity=.5,thick] (1/2,-\Inf) -- (1/2,-0.866025) arc (240:150:1) -- (-\Inf,1/2);
%		\draw [blue,opacity=.5,thick] (-\Inf,-1/2) -- (0.133975,-1/2);
%		\draw [blue,opacity=.5,thick] (1/2,-1.86603) arc (300:150:1) arc (210:150:1) arc (210:60:1) -- (1/2,\Inf);
%		\draw [blue,opacity=.5,thick] (1/2,-0.866025) arc (300:210:1);
		\draw (-\Inf,-\Inf) rectangle (\Inf,\Inf);
	\end{scope}
		
	\draw (1.2,0) node {$\times$};
	
	\begin{scope}[xshift=2.4cm]
	\begin{scope}[scale=1/\Inf]
		\begin{scope} \clip (-\Inf,-\Inf) rectangle (\Inf,\Inf); #4 \end{scope}
		
		\draw [black, thick] (-\Inf,0) -- (\Inf,0);
		\draw [black, thick] (0,-\Inf) -- (0,\Inf);
		 
		\draw [black,opacity=.25] (-\Inf,-\Inf) grid (\Inf,\Inf);
		% Partition
		\draw [dashed] (-\Inf,-\Inf) -- (-1/2,-1/2);
		\draw [dashed] (1/2,1/2) -- (\Inf,\Inf);
		\draw [dashed] (-\Inf,\Inf) -- (-1/2,1/2);
		\draw [dashed] (1/2,-1/2) -- (\Inf,-\Inf);
		\draw [dashed] (-1,0) -- (0,1) -- (1,0) -- (0,-1) -- cycle;
		\draw (-\Inf,-\Inf) rectangle (\Inf,\Inf);
	\end{scope}
	\end{scope}
	
	\draw (1.25,-1.2) node [below] {\smash{$#2$}};
\end{tikzpicture}\vspace{-0.5em}
} % end \Product definition

\newcommand{\ProductLabel}[1]{ \hspace*{0.5em} {\color{purp}Z_{#1}} \hspace*{6.5em} {\color{gold!75!black}W_{#1}} }

\begin{figure}[h!]
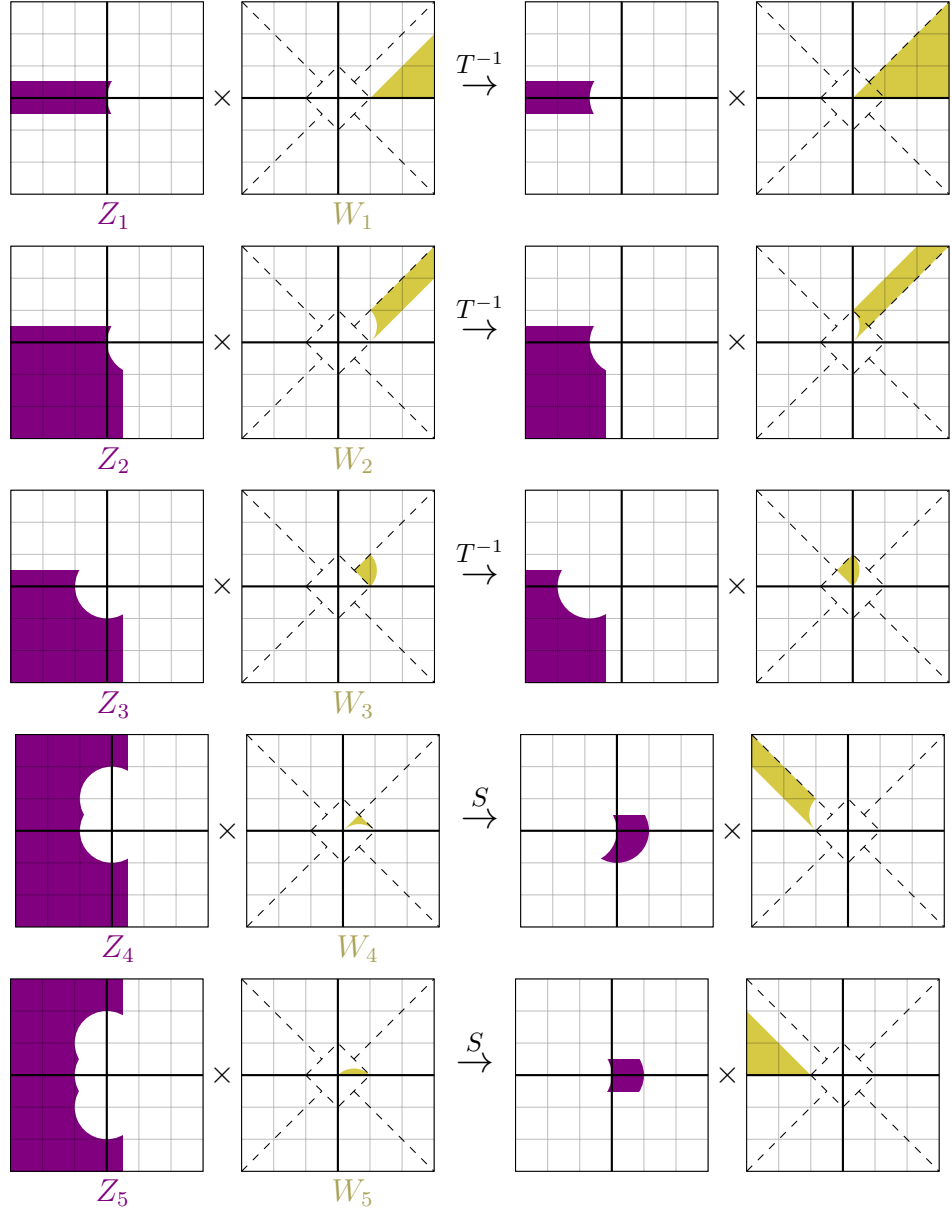

\begin{center}

\Product{\ProductLabel1}{ \fill [purp] (-\Inf,-1/2) rectangle (1/2,1/2); \fill [white] (1,0) circle (1); }{ \fill [gold] (\Inf+1,\Inf) -- (1,0) -- (\Inf+1,0); }
\raisebox{4.5em}{\large$\stackrel{T^{-1}}{\to}$}
\Product{}{ \fill [purp] (-\Inf,-1/2) rectangle (-1/2,1/2); \fill [white] (0,0) circle (1); }{ \fill [gold] (\Inf,\Inf) -- (0,0) -- (\Inf,0); } \\[1.25em]

\Product{\ProductLabel2}{ \fill [purp] (-\Inf,-\Inf) rectangle (1/2,1/2); \fill [white] (1,0) circle (1); }{ \fill [gold] (\Inf+1,\Inf) -- (1,0) arc (-45:45:\sval) -- (\Inf,\Inf); }
\raisebox{4.5em}{\large$\stackrel{T^{-1}}{\to}$}
\Product{}{ \fill [purp] (-\Inf,-\Inf) rectangle (-1/2,1/2); \fill [white] (0,0) circle (1); }{ \fill [gold] (\Inf,\Inf) -- (0,0) arc (-45:45:\sval) -- (\Inf-1,\Inf); } \\[1.25em]

\Product{\ProductLabel3}{ \fill [purp] (-\Inf,-\Inf) rectangle (1/2,1/2); \fill [white] (0,0) circle (1); }{ \fill [gold] (1,0) arc (-45:45:\sval) -- (1/2,1/2) -- cycle; }
\raisebox{4.5em}{\large$\stackrel{T^{-1}}{\to}$}
\Product{}{ \fill [purp] (-\Inf,-\Inf) rectangle (-1/2,1/2); \fill [white] (-1,0) circle (1); }{ \fill [gold] (0,0) arc (-45:45:\sval) -- (-1/2,1/2) -- cycle; } \\[1.25em]

\Product{\ProductLabel4}{ \fill [purp] (-\Inf,-\Inf) rectangle (1/2,\Inf); \fill [white] (0,1) circle (1); \fill [white] (0,0) circle (1); }{ \fill [gold] (1,0) arc (45:135:\sval) -- (1/2,1/2) -- cycle; }
\raisebox{4.5em}{\large$\stackrel{S}{\to}$}
\Product{}{ \PB{0,0} \fill [white] (-1,0) circle (1); \fill [white] (-\Inf,1/2) rectangle (\Inf,\Inf); }{ \fill [xscale=-1,gold] (\Inf+1,\Inf) -- (1,0) arc (-45:45:\sval) -- (\Inf,\Inf); } \\[1.25em]

\Product{\ProductLabel5}{ \fill [purp] (-\Inf,-\Inf) rectangle (1/2,\Inf); \fill [white] (0,1) circle (1); \fill [white] (0,0) circle (1); \fill [white] (0,-1) circle (1); }{ \fill [gold] (1,0) arc (45:135:\sval) -- cycle; }
\raisebox{4.5em}{\large$\stackrel{S}{\to}$}
\Product{}{ \begin{scope} \clip (-1/2,-1/2) rectangle (\Inf,1/2); \PB{0,0} \end{scope} \fill [white] (-1,0) circle (1); }{ \fill [gold] (-\Inf-1,\Inf) -- (-1,0) -- (-\Inf-1,0); }

\end{center}
\caption{The products $Z_k \!\times\! W_k$ and their images under $F\NE$}
\label{fig DNE} % Must go after caption.
\end{figure}

As proved in Lemma~\ref{finiteness}, property (\ref{partition image prop}) of Lemma~\ref{partition finiteness} implies that $F\NE(D\NE)$ must have finite product structure with exact same sets $W_k$. Thus there must exist sets $\hat{Z}_k$ such that
\[ F\NE(D\NE) = \bigcup_{k=1}^{40}(\hat{Z}_k\times W_k) = \bigcup_{k=1}^5 \bigcup_{\xi \in \Dih} \xi(\hat{Z}_k \times W_k). \]
Now proving that $F\NE(D\NE) = D\NE$ is equivalent to proving that $\hat{Z}_k = Z_k$ for $k=1,\ldots,5$.

Notice that
$ \hat{Z}_k \times W_k = F\NE(D\NE) \cap (\C \times W_k) $,
so we can describe each $\hat{Z}_k$ as
\< \label{Zk'} \hat{Z}_k = \setform{ z \in \C }{ \exists~ w \in W_k \text{ such that}\, (z,w) \in F\NE(D\NE) }. \>

Let us calculate $\hat{Z}_1$ explicitly here. Based on equation~(\ref{fNE Wk}) and seen in the top and bottom rows of Figure~\ref{fig DNE}, the images $f\NE(W_1)$ and $f\NE(W_5)$ contain some set $\xi W_1, \xi \in \Dih$. Therefore finding the expression for $\hat{Z}_1$ based on (\ref{Zk'}) will involve the images
\begin{align*}
	F\NE(Z_1 \times W_1) &= T^{-1}Z_1 \times T^{-1}W_1 \\
	&= T^{-1}Z_1 \times (W_1 \cup W_2 \cup W_3 \cup W_4 \cup W_5)
\end{align*}
and
\begin{align*}
	F\NE(Z_5 \times W_5) &= SZ_5 \times SW_5 
	= SZ_5 \times \hflip W_1.
\end{align*}
Since $W_1 \subset T^{-1}W_1$, we have that
\[
	T^{-1}Z_1 \times W_1 \subset F\NE(Z_1 \times W_1)
\]
and therefore
\< \label{Z1' from W1}
	T^{-1}Z_1 \subset \hat{Z}_1.
\>
Based on the symmetry of $D\NE$, the sets $\xi(SZ_5 \times \hflip W_1)$ for all $\xi \in \Dih$ are also contained in $F\NE(D\NE)$. Using $\xi = (\hflip)^{-1} = \hflip$ gives that
\[
	\hflip^{-1}(SZ_5 \times \hflip W_1) =
	\hflip SZ_5 \times W_1 \subset F\NE(Z_5 \times Z_5)
\]
and therefore
\< \label{Z1' from W5}
	\hflip SZ_5 \subset \hat{Z}_1.
\>
Since the images of $W_2$, $W_3$, and $W_4$ do not contain any $\xi W_1$, there are no more subsets of $\hat{Z}_1$ to find. Equations~(\ref{Z1' from W1}) and~(\ref{Z1' from W5}) therefore fully describe $\hat{Z}_1$, which can now be given as
\[ \label{Z1'}
	\hat{Z}_1 = T^{-1}Z_1 \cup \hflip SZ_5.
	%\tag{\ref{Z1'}a}
\]
Figure~\ref{fig Z1'} shows that $T^{-1}Z_1 \cup \hflip SZ_5$ is exactly equal to $Z_1$. Thus $\hat{Z}_1 = Z_1$ as desired.

\providecommand{\DrawPlane}{}
\renewcommand{\DrawPlane}[3][scale=0.8]{ % *** 0.9 for article
	\begin{tikzpicture}[#1]
		\begin{scope} \clip (-\Inf,-\Inf) rectangle (\Inf,\Inf); #3 \end{scope}
		\draw [black, thick] (-\Inf,0) -- (\Inf,0);
		\draw [black, thick] (0,-\Inf) -- (0,\Inf); 
		\draw [black,opacity=.25] (-\Inf,-\Inf) grid (\Inf,\Inf);
		\draw (-\Inf,-\Inf) rectangle (\Inf,\Inf);
		\draw (0,-\Inf) node [below] {#2};
	\end{tikzpicture}
}

\begin{figure}[h]
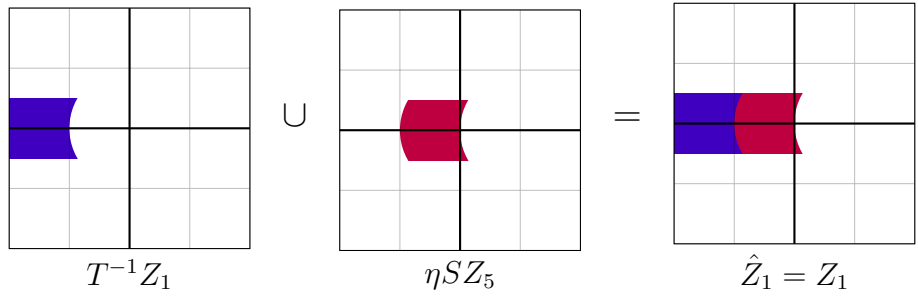

	\begin{center}
	\def\Inf{2}
	\DrawPlane{$T^{-1}Z_1$}{
		\fill [purp!50!blue] (-\Inf,-1/2) rectangle (-1/2,1/2);
		\fill [white] (0,0) circle (1);
	}
	\raisebox{5.6em}{\large$\;\cup\;$}
	\DrawPlane{$\hflip SZ_5$}{
		\begin{scope}
			\clip (-\Inf,-1/2) rectangle (1/2,1/2);
			\fill [purp!50!red] (0,0) circle (1);
		\end{scope}
		\fill [white] (1,0) circle (1);
	}
	\raisebox{5.6em}{\large$\;=\;$}
	\DrawPlane{$\hat{Z}_1 = Z_1$}{
		\begin{scope}
			\clip (-\Inf,-1/2) rectangle (1/2,1/2);
			\fill [purp!50!blue] (-\Inf,-1) rectangle (0,1);
			\fill [purp!50!red] (0,0) circle (1);
		\end{scope}
		\fill [white] (1,0) circle (1);
	}
	\end{center}\vspace*{-1em}
\caption{$\hat{Z}_1 = Z_1$ as a union of two pieces}
\label{fig Z1'} % Must go after caption.
\end{figure}

To calculate $\hat{Z}_2$ explicitly, we see\footnote{\,This can be literally seen in the first, second, and fourth rows of Figure~\ref{fig DNE}.} in equation~(\ref{fNE Wk}) that the images \[ f\NE(W_1) = T^{-1}W_1, \qquad f\NE(W_2) = T^{-1}W_2, \qquad f\NE(W_4) = SW_4 \] all contain some set $\xi W_2$ and construct $\hat{Z}_2$ by piecing together the \mbox{appropriate} symmetric copies of $T^{-1}Z_1$, $T^{-1}Z_2$, and $SZ_4$.
This gives
\[
	\begin{array}{l@{\;}p{2.18in}@{}l}
		\hat{Z}_2 
		&= $T^{-1}Z_1 \cup \dflip T^{-1}Z_2 \cup \hflip SZ_4$,
		&\phantom{=\;Z_2}
	\end{array}
\]
and indeed $\hat{Z}_2 = Z_2$.
Similarly, one can show
\[
	\begin{array}{l@{\;}p{2.18in}@{}l}
		\hat{Z}_3 
		&= $T^{-1}Z_1 \cup \dflip T^{-1}Z_2$
		&= Z_3, \\[.1em]
		\hat{Z}_4 
		&= $T^{-1}Z_1 \cup \dflip T^{-1}Z_2 \cup \rot^3 T^{-1}Z_3$
		&= Z_4, \\[.1em]
		\hat{Z}_5 
		&= $T^{-1}Z_1 \cup \rot^3 T^{-1}Z_3 \cup \dflip T^{-1}Z_3$
		&= Z_5.
	\end{array}
\]
Note the expression for $\hat{Z}_5$ correctly includes two different symmetric copies of $T^{-1}Z_3$ because the expression for $T^{-1}W_3$ in (\ref{fNE Wk}) contains two symmetric copies of~$W_5$. Here $\rot^3:z \mapsto -i\,z$ and $\rho:z \mapsto i\,\overline{z}$.

The expressions above show that $\hat{Z}_k = Z_k$ for $k=1,\ldots,5$, which proves that \mbox{$F\NE(D\NE) = D\NE$}. Moreover, the unions in these expressions are disjoint except for the boundaries of the pieces, so the map $F\NE$ is \mbox{essentially} bijective on $D\NE$. This completes the proof of Theorem~\ref{MAIN}.

\section{Trapping region}\label{sec Trapping}

If the orbit of a point $(z,w) \in \CCnoD$ under $F\NE$ enters the set $D\NE$ it will never leave, but do all orbits enter $D\NE$? Unfortunately, this is not currently known, but we now exhibit a set $\Psi\NE \supset D\NE$ that does ``trap points.''

\providecommand{\PsiZ}{-- redefined on the next line --}
\renewcommand{\PsiZ}{A}
\begin{thm} \label{PSI}
	Let $W_1,\ldots,W_5$ be the sets defined in (\ref{Wk}).
	There exist sets $\PsiZ_1,\ldots,\PsiZ_5 \subset \C$ such that the set
	\[ \Psi\NE = \bigcup_{k=1}^5 \bigcup_{\xi\in\Dih} \xi(\PsiZ_k \times W_k) \]
	has the following properties:
	\begin{enumerate}[(i)]
		\item\label{PSI trap prop} for each $(z,w) \in \CCnoD$ with $w$ irrational, there exists an integer $N\ge0$ such that $F\NE^N(z,w) \in \Psi\NE$;
		\item\label{PSI inv prop} $F\NE(\Psi\NE) \subset \Psi\NE$;
		\item\label{PSI set prop} each $\PsiZ_k$ can be explicitly described as a countable union of unit disks and half-planes;
		\item\label{PSI super prop} $D\NE \subset \Psi\NE$.
	\end{enumerate}
\end{thm}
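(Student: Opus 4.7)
The plan is to give explicit formulas for $A_1,\ldots,A_5$ as countable unions of closed unit disks and closed half-planes, starting from the sets $Z_1,\ldots,Z_5$ of~\eqref{Zk} and enlarging them by adjoining the minimal additional half-planes and unit disks needed to make the resulting $\Psi\NE$ closed under $F\NE$. Because each generator $T, T^{-1}, U, U^{-1}, S$ sends half-planes and unit disks (in all configurations that will arise here) to half-planes or unit disks, the closure of such a family under pre-composition by the five generators remains in the required class, and the construction terminates with an explicit description. Properties (\ref{PSI set prop}) and (\ref{PSI super prop}) are then immediate from the construction.

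For property (\ref{PSI inv prop}), the forward invariance, I would proceed by a finite case analysis driven by equation~\eqref{fNE Wk}. For each $k$, the map $f\NE$ sends $W_k$ to a known union of sets of the form $\xi W_j$, and $F\NE$ acts on $A_k \times W_k$ by a single generator $h_k$. It therefore suffices to verify the containments
\[ h_k\bigl(\,A_k \cap h_k^{-1}(\xi W_j)\,\bigr) \subseteq \xi A_j \]
for every triple $(k,j,\xi)$ appearing in the decomposition of $f\NE(W_k)$; each such containment reduces to a finite check on half-planes and unit disks using the Möbius action of $T, U, S$, in complete analogy with the calculations already carried out in Section~\ref{sec Bij}.

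The substantive content of the theorem is the trapping property (\ref{PSI trap prop}), which I would isolate as Lemma~\ref{V lemma} and which uses Theorem~\ref{complex convergence}. Let $(z,w)\in\CCnoD$ with $w$ irrational and set $(z_n, w_n) := F\NE^n(z,w)$. The sub-steps of $F\NE$ group naturally into the full c.f.\ steps of Section~\ref{sec NT}: if $\{a_m\}$ are the partial quotients of $w$ under the diamond algorithm and $p_m, q_m$ are the associated convergents from~\eqref{pq recurrence}, then after the $m$-th full step the $z$-coordinate is a Möbius transformation of the original $z$ with coefficients built from $p_{m-1}, p_{m-2}, q_{m-1}, q_{m-2}$. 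Theorem~\ref{complex convergence} guarantees $|q_m|\to\infty$ and $p_m/q_m\to w$; since $z\ne w$, this forces the denominator of that Möbius expression to stay bounded away from zero while the numerator is controlled, so $\{z_n\}$ remains bounded and is driven towards the "tail fibre" of $w$ in the sense of the natural extension. The remaining step is to combine this asymptotic statement with the explicit unit-disk description of the $A_k$ to conclude that $z_n \in A_{k_n}$ (where $w_n \in \xi_n W_{k_n}$) for some finite $n$.

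The main obstacle will be this last step: upgrading the qualitative statement of Theorem~\ref{complex convergence} into the guarantee that the orbit actually enters $\Psi\NE$ after finite time rather than merely accumulating on its boundary. I expect the proof of Lemma~\ref{V lemma} to require tracking the position of $z_n$ relative to the specific unit disks that make up the boundary of each $A_{k_n}$, and to be the one place in the argument where the arithmetic content of Section~\ref{sec NT} is used in an essential way.
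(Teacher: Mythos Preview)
Your approach inverts the paper's construction. The paper does not start from $D\NE$ and enlarge; it first proves Lemma~\ref{V lemma} for the very simple set $V\NE = \overline{\D} \times S(\Phi\NE)$, and then \emph{defines} $\Psi\NE := \bigcup_{n\ge0} F\NE^n(V\NE)$. With that definition, properties~(\ref{PSI trap prop}) and~(\ref{PSI inv prop}) are automatic, and~(\ref{PSI set prop}) becomes a finite computation: one iterates $F\NE$ on $V\NE$, and the union stabilizes after four steps, yielding the explicit $A_k$. Property~(\ref{PSI super prop}) is verified at the end by inspecting $Z_k \subset A_k$.

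Your plan makes (\ref{PSI set prop}) and (\ref{PSI super prop}) automatic but pushes the real work into~(\ref{PSI trap prop}), and you have correctly flagged that as the obstacle. The difficulty you anticipate---tracking $z_n$ against the boundary arcs of each $A_{k_n}$---is exactly what the paper's construction avoids. Lemma~\ref{V lemma} targets only $V\NE$: along the subsequence of iterates immediately following an $S$-step one has the exact identity
\[
z_{k+1} \;=\; \frac{q_{k-1}}{q_k} \;+\; \frac{1}{q_k^{\,2}\bigl(\tfrac{p_k}{q_k}-z\bigr)},
\]
and since $\abs{q_k}\to\infty$ by Theorem~\ref{complex convergence} there is a strictly increasing subsequence of $\abs{q_k}$, giving some $k$ with $\abs{z_{k+1}}\le 1$, i.e.\ $(z_{k+1},w_{k+1})\in V\NE$. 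That is the entire trapping argument; no comparison with the $A_k$ is needed. If you wish to keep your construction order, the clean repair is to prove Lemma~\ref{V lemma} exactly as stated and then observe $\overline{\D}\subset A_1$ and $\overline{\D}\subset A_2$, which gives $V\NE\subset\Psi\NE$ and hence~(\ref{PSI trap prop}).

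One small correction: your displayed containment $h_k\bigl(A_k \cap h_k^{-1}(\xi W_j)\bigr)\subseteq \xi A_j$ conflates the two coordinates. Since each $W_k$ is consistent, $F\NE(A_k\times W_k)=h_k(A_k)\times h_k(W_k)$, and the condition you need is simply $h_k(A_k)\subseteq \xi A_j$ for every $\xi W_j$ appearing in the decomposition~\eqref{fNE Wk} of $f\NE(W_k)$.
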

Properties (\ref{PSI trap prop}) and (\ref{PSI inv prop}) together mean that $\Psi\NE$ is a \textdef{trapping region}. The exclusion of rational $w$-values in property (\ref{PSI trap prop}) is because those points have finite orbits under $F\NE$. Also note that property~(\ref{PSI super prop}) is equivalent to $Z_k \subset \PsiZ_k$ for $k=1,\ldots,5$.

Before attempting to describe the sets $\PsiZ_k$, we first look at a set $V\NE$ satisfying only (\ref{PSI trap prop}).
\begin{lem}\label{V lemma}
	Define
	\<[V NE] V\NE := \overline{\D} \times S(\Phi\NE). \>
	For all $(z,w)$ with $w$ irrational, there exists an integer $N\ge0$ such that $F\NE^N(z,w) \in V\NE$.
\end{lem}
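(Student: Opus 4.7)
The plan is to connect iterates of $F\NE$ to the continued fraction iteration of $w$. Since $F\NE$ applies the same M\"obius transformation to both coordinates at each step (the choice being determined only by the second coordinate), iterating $F\NE$ traces out $f\NE^n(w)$ in the second slot and the same composition of generators in the first. Let $\{w_n\}$, $\{a_n\}$ be the CF sequences from~(\ref{complex cf algorithm}) applied to $w_0 = w$, and $\{p_n\}$, $\{q_n\}$ the convergent recurrences~(\ref{pq recurrence}). For each $k \ge 1$ there is an integer $m_k \ge 0$ with $F\NE^{m_k}(z,w) = (M_k(z),\, w_k)$, where $M_k$ is the unique M\"obius transformation sending $w$ to $w_k$.

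First I would make $M_k$ explicit. The identity $w = \cf{a_0,\ldots,a_{k-1},w_k} = \frac{p_{k-1}w_k - p_{k-2}}{q_{k-1}w_k - q_{k-2}}$ inverts to
\[ M_k(\zeta) = \frac{q_{k-2}\,\zeta - p_{k-2}}{q_{k-1}\,\zeta - p_{k-1}}. \]
Setting $s_n := |q_n z - p_n|$, this makes the inequality $|M_k(z)| \le 1$ equivalent to $s_{k-2} \le s_{k-1}$. The other required membership, $w_k \in S(\Phi\NE)$, is automatic for $k \ge 1$, since $w_k = S(w_{k-1} - a_{k-1})$ and $w_{k-1} - a_{k-1} = w_{k-1} - c\NE(w_{k-1}) \in \Phi\NE$ by the construction of $c\NE$ in Lemma~\ref{Phi to c}.

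The heart of the argument is then to produce some $k \ge 1$ with $s_{k-1} \ge s_{k-2}$. Observe that $s_{-1} = |0 \cdot z - 1| = 1$, while $s_n = |q_n|\,|z - p_n/q_n|$ with $|q_n| \to \infty$ and $p_n/q_n \to w$ by Theorem~\ref{complex convergence}. Because $(z,w) \notin \Delta$ forces $z \ne w$, the second factor tends to the positive constant $|z-w|$, and hence $s_n \to \infty$. A nonnegative sequence starting at $1$ and diverging to infinity cannot be strictly decreasing throughout, so there exists a smallest $k \ge 1$ with $s_{k-1} \ge s_{k-2}$. For this $k$, $F\NE^{m_k}(z,w) \in \overline{\D} \times S(\Phi\NE) = V\NE$, and we take $N = m_k$.

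The one subtlety I anticipate is the degenerate case $z = p_{n_0}/q_{n_0}$ for some $n_0 \ge 0$: then $s_{n_0} = 0$, so $M_{n_0+1}(z) = \infty$ and the orbit transiently visits the point at infinity before $M_{n_0+2}(z) = 0 \in \overline{\D}$ returns it to $V\NE$. Making this rigorous amounts either to restricting attention to $z$ that is not a $w$-convergent (a dense open condition) or to extending $F\NE$ consistently to $(\C\cup\{\infty\})^2\setminus\Delta$; I would choose the latter so the conclusion covers every $z \in \C$. Aside from this, the remaining work is the routine verification of the formula for $M_k$ and sign tracking in the convergent recurrence.
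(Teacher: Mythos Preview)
Your argument is correct and follows the paper's strategy: trace the $F\NE$-orbit along the continued-fraction expansion of $w$, write the first coordinate after the $k$-th inversion as $(q_{k-2}z-p_{k-2})/(q_{k-1}z-p_{k-1})$, and use Theorem~\ref{complex convergence} to locate a $k$ for which this lies in $\overline{\D}$. Your endgame---noting that $s_n=|q_nz-p_n|\to\infty$ forces $s_{k-1}\ge s_{k-2}$ for some $k$---is a cleaner variant of the paper's, which instead writes $z_{k+1}=q_{k-1}/q_k+\epsilon_k$ and bounds the two summands separately; your explicit treatment of the degenerate case $z=p_{n_0}/q_{n_0}$ is also more careful than the paper's.
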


\begin{proof}
Since $w$ is irrational, its continued fraction expansion is infinite. Let
\[ w = \cf{ n_0+m_0i,n_1+m_1i,\ldots } = (n_0+m_0i) - \dfrac1{(n_1+m_1i) - \dfrac1\ddots} \]
be the continued fraction for $w$ generated by the diamond algorithm. That is, 
\[ w_0 = w, \qquad n_k+m_ki = c\NE(w_k), \qquad w_k = \frac{-1}{w_k-(n_k+m_ki)}, \]
where $c\NE := c_{\Phi\NE}$ is the diamond choice defined as described in Lemma~\ref{Phi to c}. Equivalently,  $c\NE(w_k) = w_k - f\NE^j(w_k)$ where $j$ is the smallest natural number for which $f\NE^j(w_k) \in \Phi\NE$.

In terms of the functions $T,T^{-1},U,U^{-1}$, and $S$, we can write
\[
	w_{k+1} = S U^{-m_k} T^{-n_k} \cdots S U^{-m_1} T^{-n_1} S U^{-m_0} T^{-n_0} w.
\]
Construct the sequence $\{(z_k,w_k)\}$ with $w_k$ as above and 
\[
	z_{k+1} := S U^{-m_k} T^{-n_k} \cdots S U^{-m_1} T^{-n_1} S U^{-m_0} T^{-n_0} z.
\]
%Then $w_{k+1} = \cf{ n_{k+1}+m_{k+1}i,n_{k+2}+m_{k+2}i,\ldots }$.
Note that the points $\{(z_k,w_k)\}$ are all in the orbit of $(z_0,w_0) = (z,w)$ under $F\NE$, specifically those points for which $F\NE$ acted by $S(z,w) = (-1/z,-1/w)$ in its last iteration. %or, equivalently, points for which the previous point in the orbit was in $\C\times \Phi\NE$.
Since inversion was the last operation applied, it must be that $S^{-1} w_{k+1} \in \Phi\NE$ and thus $w_{k+1} \in S(\Phi\NE)$.

Let $\frac{p_k}{q_k}$ be the $k^\text{th}$ convergents for $w$. Then
\begin{align*}
	z &= T^{n_0} U^{m_0} S T^{n_1} U^{m_1} S \cdots T^{n_k} U^{m_k} S(z_{k+1}) 
	= \frac{p_k z_{k+1} - p_{k-1}}{q_k z_{k+1} - q_{k-1}}, \\
	w &= T^{n_0} U^{m_0} S T^{n_1} U^{m_1} S \cdots T^{n_k} U^{m_k} S(w_{k+1})
	= \frac{p_k w_{k+1} - p_{k-1}}{q_k w_{k+1} - q_{k-1}}.
\end{align*}
Hence
\[ z_{k+1} = \frac{q_{k-1} z - p_{k-1}}{q_k z - p_k} = \frac{q_{k-1}}{q_k} + \frac1{q_k^2(\frac{p_k}{q_k} - z)}. \]
Setting $\epsilon_k := q_k^{-2}(\frac{p_k}{q_k} - z)^{-1}$, we have
\begin{align*}
	z_{k+1} &= \frac{q_{k-1}}{q_k} + \epsilon_k \\
	\abs{z_{k+1}} &\le \frac{\abs{q_{k-1}}}{\abs{q_k}} + \abs{\epsilon_k}.
\end{align*}
Since $\frac{p_k}{q_k} \to w \ne z$ by Theorem~\ref{complex convergence}, we have $\abs{\epsilon_k} < \frac1{\abs{q_k}^2}$.
Thus
\[ \abs{z_{k+1}} < \frac{\abs{q_{k-1}}}{\abs{q_k}} + \frac1{\abs{q_k}^2}%
= \frac{ \abs{q_{k-1}}\abs{q_k} + 1 }{ \abs{q_k}^2 }%
. \]
Since $\abs{q_k} \to \infty$ by Theorem~\ref{complex convergence}, there must be an infinite subsequence $\abs{q_{k_j}}$ that is strictly increasing. Thus there are infinitely many $k$ for which $\frac{\abs{q_{k-1}}}{\abs{q_k}} < 1$.
\renewcommand{\ell}{k}%
Thus there exists $\ell$ such that $\abs{z_\ell} \le 1$%\COMMENT{Is this really proven?}
, equivalently, $z_\ell \in \overline{\D}$, and we already know that $w_\ell \in S(\Phi\NE)$. This means precisely that $(z_\ell,w_\ell) \in V\NE = \overline{\D} \times S(\Phi\NE)$.
\end{proof}

Although the orbit of every irrational point enters $V\NE$ by Lemma~\ref{V lemma}, orbits will quickly leave $V\NE$ as well. One way to create a trapping region is to take unions of the images of $V\NE$. That is, the set
\<[Psi NE] \Psi\NE := \bigcup_{n\ge0} F\NE^n(V\NE) \>
will be a trapping region by construction.

Equation~(\ref{Psi NE}) actually serves as the definition of $\Psi\NE$, and it remains to show that this set has all the properties claimed in Theorem~\ref{PSI}. The existence of some sets $\PsiZ_k$ satisfying $\Psi\NE = \bigcup_{\,k=1}^{\,5} \PsiZ_k \times W_k$ mod $\Dih$ follows immediately from the finite product structure of $V\NE$; the finite trapping property (\ref{PSI trap prop}) follows from $V\NE \subset \Psi\NE$; and property (\ref{PSI inv prop}), $F\NE(\Psi\NE) \subset \Psi\NE$, holds by construction. Thus it is really the explicit description of the sets, property (\ref{PSI set prop}), that is noteworthy.

\providecommand{\Ball}[1]{} \renewcommand{\Ball}[1]{\overline{B}(#1)}
\begin{proof}[Proof of $($\ref{PSI set prop}$)$]
Modulo $\Dih$, we have $S(\Phi\NE) = W_1 \cup W_2$. That is, the actual equality is $S(\Phi\NE) = \bigcup_{\xi\in\Dih}\xi(W_1\cup W_2)$, but any calculations involving $S(\Phi\NE)$ can be done using only $W_1$ and $W_2$. The remaining calculations are done mod $\Dih$; we start with
\[ V\NE = \overline{\D} \times W_1 \cup \overline{\D} \times W_2. \]
Then
\begin{align*}
	F\NE(\D\times W_1) &= \Ball{-1} \times (W_1 \cup W_2 \cup W_3 \cup W_4 \cup W_5) \end{align*} and \begin{align*} 
	F\NE(\D\times W_2) &= \Ball{-1} \times (\dflip W_2 \cup \dflip W_3 \cup \dflip W_4),
\end{align*}
where $\Ball{c}$ is the unit ball in $\C$ with center $c$ and $\dflip \in \mathrm{Dih}_4$ is the reflection $\dflip: w \mapsto i\,\overline{w}$.
Since $\dflip^{-1} \Ball{-1} = \dflip \Ball{-1} = \Ball{-i}$, we have
\[
	F\NE(V\NE) = \Ball{-1} \times W_1 \cup \big[ \Ball{-1} \cup \Ball{-i} \big] \times [W_2 \cup W_3 \cup W_4].
\]
The image $F\NE^2(V\NE)$ will include half-planes because the image of the disk $\Ball{-1}$ under $S:z\mapsto -1/z$ is the half-plane $\setform{ z }{ \Re\,z > \tfrac12 }$. Actually, $S(W_4) = \eta W_2$ and $S(W_5) = \eta W_1$ with $\eta:(x,y) \mapsto (-x,y)$, so we end up with the half-plane $\setform{ z }{ \Re\,z < \tfrac{-1}2 }$ after flipping the sets back around (the product $\setform{ z }{ \Re\,z > \tfrac12 } \times \eta W_2$ is equivalent mod $\Dih$ to $\setform{ z }{ \Re\,z < \tfrac{-1}2 } \times W_2$, and likewise for a product with $\eta W_1$).
\def\vsp{0.75em}
\begin{align*}
	F\NE\big( \Ball{-1} \times W_1 \big)
	&= \Ball{-2} \times (W_1 \cup W_2 \cup W_3 \cup W_4 \cup W_5) \\[\vsp]
	F\NE\big( [\Ball{-1}\cup \Ball{-i}] \times W_2 \big)
	&= [\Ball{-2} \cup \Ball{-1-i}] \times (\dflip W_2 \cup \dflip W_3 \cup \dflip W_4) \\
	&= [\Ball{-2i} \cup \Ball{-1-i}] \times (W_2 \cup W_3 \cup W_4) \\[\vsp]
	F\NE\big( [\Ball{-1}\cup \Ball{-i}] \times W_3 \big)
	&= [\Ball{-2} \cup \Ball{-1-i}] \times (\dflip W_5 \cup i W_5 \cup i W_4) \\
	&= [\Ball{-2} \cup \Ball{-1-i}] \times \dflip W_5 \\&\qquad\cup\; [\Ball{-2} \cup \Ball{-1-i}]  \times i(W_5 \cup W_4) \\
	&= [\Ball{-2i} \cup \Ball{-1-i}] \times W_5 \\&\qquad\cup\; [\Ball{2i} \cup \Ball{-1+i}]  \times (W_5 \cup W_4) \\[\vsp]
	F\NE\big( [\Ball{-1}\cup \Ball{-i}] \times W_4 \big)
	&= [\setform{z}{\Re\,z > \tfrac12} \cup \setform{z}{\Im\,z < \tfrac{-1}2}] \times \eta W_2 \\
	&= [\setform{z}{\Re\,z < \tfrac{-1}2} \cup \setform{z}{\Im\,z < \tfrac{-1}2}] \times W_2 \\[\vsp]
	F\NE\big( \Ball{-1} \times W_5 \big)
	&= \setform{z}{\Re\,z > \tfrac12} \times \eta W_1 \\
	&= \setform{z}{\Re\,z < \tfrac{-1}2} \times W_1.
\end{align*}
Now we can collect ``like terms'' (products with the same $W_k$) to get
\begin{align*}
	F\NE^2(V\NE)
	&= \Big[ \Ball{-2} \cup \setform{z}{\Re\,z < \tfrac{-1}2} \Big] \times W_1 \\
	&\quad\cup\;\Big[ \Ball{-2} \cup \Ball{-2i} \cup \Ball{-1-i} \\&\qquad\qquad\cup \setform{z}{\Re\,z < \tfrac{-1}2} \cup \setform{z}{\Im\,z < \tfrac{-1}2} \Big] \times W_2 \\
	&\quad\cup\;\Big[ \Ball{-2} \cup \Ball{-2i} \cup \Ball{-1-i} \Big] \times W_3 \\
	&\quad\cup\;\Big[ \Ball{-2} \cup \Ball{-2i} \cup \Ball{-1-i} \cup \Ball{2i} \cup \Ball{-1+i} \Big] \times W_4 \\
	&\quad\cup\;\Big[ \Ball{-2} \cup \Ball{-2i} \cup \Ball{-1-i} \cup \Ball{2i} \cup \Ball{-1+i} \Big] \times W_5.
\end{align*}

\providecommand{\DrawPlane}{}
\renewcommand{\DrawPlane}[3][scale=0.45]{
	\begin{tikzpicture}[#1]
		\begin{scope} \clip (-\Inf,-\Inf) rectangle (\Inf,\Inf); #3 \end{scope}
		\draw [black, thick] (-\Inf,0) -- (\Inf,0);
		\draw [black, thick] (0,-\Inf) -- (0,\Inf); 
		\draw [black,opacity=.25] (-\Inf,-\Inf) grid (\Inf,\Inf);
		\draw (-\Inf,-\Inf) rectangle (\Inf,\Inf);
		\draw (\Inf,0) node [right] {$\!\times {\color{gold!75!black}W_{#2}}$};
	\end{tikzpicture}%
}

\def\Inf{3.25}
\tikzstyle{transpurp}=[draw=purp!75!black,fill=purp,fill opacity=0.5]
\begin{figure}
\begin{center}
	\DrawPlane1{
    	\draw [transpurp] (0,0) circle (1); % from V
    	\draw [transpurp] (-1,0) circle (1); % from FV
    	\draw [transpurp] (-2,0) circle (1);
    	\draw [transpurp] (-\Inf-1,-\Inf-1) rectangle (-1/2,\Inf+1);
    }
	\DrawPlane2{
    	\draw [transpurp] (0,0) circle (1); % from V
    	\draw [transpurp] (-1,0) circle (1); % from FV
    	\draw [transpurp] (0,-1) circle (1); % from FV
    	\draw [transpurp] (-2,0) circle (1);
    	\draw [transpurp] (0,-2) circle (1);
    	\draw [transpurp] (-1,-1) circle (1);
    	\draw [transpurp] (-\Inf-1,-\Inf-1) rectangle (-1/2,\Inf+1);
    	\draw [transpurp] (-\Inf-1,-\Inf-1) rectangle (\Inf+1,-1/2);
    }
    \DrawPlane3{
    	\draw [transpurp] (-1,0) circle (1); % from FV
    	\draw [transpurp] (0,-1) circle (1); % from FV
    	\draw [transpurp] (-2,0) circle (1);
    	\draw [transpurp] (0,-2) circle (1);
    	\draw [transpurp] (-1,-1) circle (1);
    } \\[1em]
    \DrawPlane4{
    	\draw [transpurp] (-1,0) circle (1); % from FV
    	\draw [transpurp] (0,-1) circle (1); % from FV
    	\draw [transpurp] (-2,0) circle (1);
    	\draw [transpurp] (0,-2) circle (1);
    	\draw [transpurp] (-1,-1) circle (1);
    	\draw [transpurp] (0,2) circle (1);
    	\draw [transpurp] (-1,1) circle (1);
    }
    \DrawPlane5{
    	\draw [transpurp] (-1,0) circle (1); % from FV
    	% W5 is not crossed with \Ball{-i) in F(V)
    	\draw [transpurp] (-2,0) circle (1);
    	\draw [transpurp] (0,-2) circle (1);
    	\draw [transpurp] (-1,-1) circle (1);
    	\draw [transpurp] (0,2) circle (1);
    	\draw [transpurp] (-1,1) circle (1);
    }
\end{center}
\vspace*{-0.5em}
\caption{Products in the set $V\NE \cup F\NE(V\NE) \cup F\NE^2(V\NE)$}
\label{fig Psi after 2} % Must go after caption.
\end{figure}

Using inclusions such as $\Ball{-2} \subset \setform{z}{\Re\,z < \tfrac{-1}2}$, the expression of $F\NE^2(V\NE)$ as a union of products can be reduced somewhat, and the expression of $V\NE \cup F\NE(V\NE) \cup F\NE^2(V\NE)$ can be reduced significantly.

\begin{quote}\textbf{Remark.} Geometrically, the ``outer layer'' of disks (furthest centers from origin in the $1$-norm) in the products with $W_3,W_4,W_5$ in Figure~\ref{fig Psi after 2} continue to move away from the origin with higher iterations of $F_\diamond$.\end{quote}

This process continues with $F\NE^3(V\NE)$, $F\NE^4(V\NE)$, and $F\NE^5(V\NE)$, at which one finds that
\( \bigcup_{n=0}^5 F\NE^n(V\NE) = \bigcup_{n=0}^4 F\NE^n(V\NE), \)
so we can stop iterating and terminate with
\( \Psi\NE = \bigcup_{\,n=0}^{\,4} F\NE^n(V\NE). \)

A graphical depiction of $\Psi\NE$ can be found in Figure~\ref{fig Psi}. In formulas, the sets are as follows:
\begin{align}
	\PsiZ_1 &= \Ball{0} \cup \Ball{-1+ i} \cup \Ball{-1- i} \cup \setform{ z }{ \Re\,z \le \tfrac{-1}2 } \notag \\
	\PsiZ_2 &= \Ball{0} \cup \Ball{-1+i} \cup \setform{ z }{ \Re\,z \le \tfrac{-1}2 } \cup \setform{ z }{ \Im\,z \le \tfrac{-1}2 } \notag \\
	\PsiZ_3 &= \Ball{-1} \cup \Ball{-i} \cup \Ball{1-2i} \notag\\&\qquad \cup \setform{ z }{ \Re\,z \le \tfrac{-1}2 } \cup \setform{ z }{ \Im\,z \le \tfrac{-3}2 } \label{PsiZ k} \\
	\PsiZ_4 &= \Ball{2i} \cup \Ball{-1+i} \cup \Ball{-1} \cup \Ball{-i} \cup \Ball{1-2i} \notag\\&\qquad \cup \setform{ z }{ \Re\,z \le \tfrac{-1}2 } \cup \setform{ z }{ \abs{\Im\,z} \ge \tfrac{3}2 } \notag \\
	\PsiZ_5 &= \Ball{2i} \cup \Ball{-2i} \cup \Ball{-1+ i} \cup \Ball{-1- i} \cup \Ball{-1} \notag\\&\qquad \cup \setform{ z }{ \Re\,z \le \tfrac{-1}2 } \cup \setform{ z }{ \abs{\Im\,z} \ge \tfrac{3}2 } \notag
\end{align}
In equation~(\ref{PsiZ k}), each $\PsiZ_k$ is a countable union of unit disks and half-planes, so these descriptions agree with property~(\ref{PSI set prop}) of Theorem~\ref{PSI}.
\end{proof}

Figure~\ref{fig Psi} shows the set $\Psi\NE$ and the bijectivity domain $D\NE$ calculated previously. As usual, the set $\Psi\NE$ is really $\bigcup_{\,k=1}^{\,5} \bigcup_{\,\xi\in\mathrm{Dih}_4} (\xi \PsiZ_k \times \xi W_k)$ even though only $\PsiZ_1\times W_1, \ldots, \PsiZ_5\times W_5$ are shown in the figure. From these pictures it is clear that $Z_k \subset \PsiZ_k$ for all $k$ (this can also be verified using equations~(\ref{Zk}) and~(\ref{PsiZ k})), which implies property (\ref{PSI super prop}), $D\NE \subset \Psi\NE$. This completes the proof of Theorem~\ref{PSI} in its entirety.

\renewcommand{\Product}[3][scale=1.8]{
\begin{tikzpicture}[#1]
	\begin{scope}[scale=1/\Inf]
		\begin{scope} \clip (-\Inf,-\Inf) rectangle (\Inf,\Inf); #2 \end{scope}
		
		\draw [black, thick] (-\Inf,0) -- (\Inf,0);
		\draw [black, thick] (0,-\Inf) -- (0,\Inf);
		 
		\draw [black,opacity=.25] (-\Inf,-\Inf) grid (\Inf,\Inf);
		\draw (-\Inf,-\Inf) rectangle (\Inf,\Inf);
	\end{scope}
		
	\draw (1.25,0) node {$\times$};
	
	\def\tempInf{\Inf}
	\def\Inf{2}
	\begin{scope}[xshift=2.5cm]
	\begin{scope}[scale=1/\Inf]
		\begin{scope} \clip (-\Inf,-\Inf) rectangle (\Inf,\Inf); #3 \end{scope}
		
		\draw [black, thick] (-\Inf,0) -- (\Inf,0);
		\draw [black, thick] (0,-\Inf) -- (0,\Inf);
		 
		\draw [black,opacity=.25] (-\Inf,-\Inf) grid (\Inf,\Inf);
		% Partition
		\draw [dashed] (-\Inf,-\Inf) -- (-1/2,-1/2);
		\draw [dashed] (1/2,1/2) -- (\Inf,\Inf);
		\draw [dashed] (-\Inf,\Inf) -- (-1/2,1/2);
		\draw [dashed] (1/2,-1/2) -- (\Inf,-\Inf);
		\draw [dashed] (-1,0) -- (0,1) -- (1,0) -- (0,-1) -- cycle;
		\draw (-\Inf,-\Inf) rectangle (\Inf,\Inf);
	\end{scope}
	\end{scope}
	\def\Inf{\tempInf}
\end{tikzpicture}
} % end \Product definition

\def\Inf{3.5}
\tikzstyle{Dpurp}=[purp!67!black]
\tikzstyle{PSIpurp}=[purp!67!white]
\begin{figure}[h]
\begin{center}
\Product{
	\fill [PSIpurp] (0,0) circle (1);
	\fill [PSIpurp] (-1,1) circle (1);
	\fill [PSIpurp] (-1,-1) circle (1);
	\fill [PSIpurp] (-\Inf,-\Inf) rectangle (-1/2,\Inf);
	\fill [Dpurp] (-\Inf,1/2) -- (0.13397,1/2) arc (150:210:1) -- (-\Inf,-1/2);
}{ \fill [gold] (\Inf+1,\Inf) -- (1,0) -- (\Inf+1,0); }
\\[1em]
\Product{
	\fill [PSIpurp] (0,0) circle (1);
	\fill [PSIpurp] (-1,1) circle (1);
	\fill [PSIpurp] (-1,-1) circle (1);
	\fill [PSIpurp] (-\Inf,-\Inf) rectangle (-1/2,\Inf);
	\fill [PSIpurp] (-\Inf,-\Inf) rectangle (\Inf,-1/2); % added to 1 to make 2
	\fill [Dpurp] (-\Inf,1/2) -- (0.13397,1/2) arc (150:240:1) -- (1/2,-\Inf) -- (-\Inf,-\Inf);
}{ \fill [gold] (\Inf+1,\Inf) -- (1,0) arc (-45:45:\sval) -- (\Inf,\Inf); }
\\[1em]
\Product{
	\fill [PSIpurp] (-1,0) circle (1);
	\fill [PSIpurp] (0,-1) circle (1);
	\fill [PSIpurp] (1,-2) circle (1);
	\fill [PSIpurp] (-\Inf,-\Inf) rectangle (-1/2,\Inf);
	\fill [PSIpurp] (-\Inf,-\Inf) rectangle (\Inf,-3/2); % lower than 1,2 planes
	\fill [Dpurp] (-\Inf,1/2) -- (0.13397-1,1/2) arc (150:300:1) -- (1/2,-\Inf) -- (-\Inf,-\Inf);
}{ \fill [gold] (1,0) arc (-45:45:\sval) -- (1/2,1/2) -- cycle; }
\\[1em]
\Product{
	% Same for 4 and 5:
	\fill [PSIpurp] (-\Inf,3/2) rectangle (\Inf,\Inf);
	\fill [PSIpurp] (0,2) circle (1);
	\fill [PSIpurp] (-1,1) circle (1);
	\fill [PSIpurp] (-1,0) circle (1);
	\fill [PSIpurp] (-\Inf,-\Inf) rectangle (-1/2,\Inf);
	\fill [PSIpurp] (-\Inf,-\Inf) rectangle (\Inf,-3/2);
	\fill [PSIpurp] (-1,-1) circle (1); % redundant for 4 ?
	\fill [PSIpurp] (0,-2) circle (1); % redundant for 4 ?
	% For 4:
	\fill [PSIpurp] (0,-1) circle (1);
	\fill [PSIpurp] (1,-2) circle (1); 
	\fill [Dpurp] (1/2,\Inf) -- (1/2,2-0.13397) arc (60:210:1) arc (150:300:1) -- (1/2,-\Inf) -- (-\Inf,-\Inf) -- (-\Inf,\Inf);
}{ \fill [gold] (1,0) arc (45:135:\sval) -- (1/2,1/2) -- cycle; }
\\[1em]
\Product{
	% Same for 4 and 5:
	\fill [PSIpurp] (-\Inf,3/2) rectangle (\Inf,\Inf);
	\fill [PSIpurp] (0,2) circle (1);
	\fill [PSIpurp] (-1,1) circle (1);
	\fill [PSIpurp] (-1,0) circle (1);
	\fill [PSIpurp] (-\Inf,-\Inf) rectangle (-1/2,\Inf);
	\fill [PSIpurp] (-\Inf,-\Inf) rectangle (\Inf,-3/2);
	\fill [PSIpurp] (-1,-1) circle (1);
	\fill [PSIpurp] (0,-2) circle (1); 
	% 5 is missing two disks compared to 4.
	%
	\fill [Dpurp] (1/2,\Inf) -- (1/2,2-0.13397) arc (60:210:1) arc (150:210:1) arc (150:300:1) -- (1/2,-\Inf) -- (-\Inf,-\Inf) -- (-\Inf,\Inf);
}{ \fill [gold] (1,0) arc (45:135:\sval) -- cycle; }
\end{center}
\caption{\mbox{Trapping set $\Psi\NE$ {\color{purp!75!white}(light)} and bijectivity domain $D\NE$ {\color{purp!67!black}(dark)}.}}
\label{fig Psi} % Must go after caption.
\end{figure}

\pagebreak[10]

\end{document}